\documentclass[reqno,fleqn,twoside,a4paper,12pt]{amsart}
\usepackage[utf8]{inputenc}
\usepackage{hyperref}
\usepackage[a4paper,left=25mm,right=25mm,top=50mm,bottom=50mm]{geometry}
\usepackage{amsmath}
\usepackage{amssymb}
\usepackage{amsfonts}
\usepackage[usenames]{color}
\usepackage{cite}
\usepackage{enumerate}

\subjclass[2010]{34A08, 34A60, 47H10}

\thispagestyle{empty}

\numberwithin{equation}{section}
\newtheorem{theorem}{Theorem}

\newtheorem{corollary}[theorem]{Corollary}

\newtheorem{definition}[theorem]{Definition}
\newtheorem{example}[theorem]{Example}

\newtheorem{lemma}[theorem]{Lemma}

\newtheorem{remark}[theorem]{Remark}

\def\F{\leavevmode\setbox0=\hbox{F}\kern0pt\rlap
	{\kern.04em\raise.188\ht0\hbox{-}}F}

\keywords{$\theta$-contractions, multivalued maps, fixed points, fractional differential inclusions, nonlocal boundary conditions}

\begin{document}
	
		\address{$^{\dagger }$Department of Mathematics,  Faculty of Science and Arts, Mu\c{s} Alparslan University, Mu\c{s} 49100, Turkey}
		\email{\href{mailto:isikhuseyin76@gmail.com}{isikhuseyin76@gmail.com}}
		
		\title[A new class of set-valued contractions]{\sc Fractional differential inclusions with a new class of set-valued contractions}
		\author[H. I\c{S}IK] {H\"{u}sey\.{I}n I\c{s}\i k$^{\dagger }$}
	
	\begin{abstract}
	The aim of this study to investigate the existence of solutions for the following nonlocal integral boundary value problem of Caputo
	type fractional differential inclusions:
	\begin{equation*}
	\left\{
	\begin{array}{ll} 
	{^C}D^\beta_{t_0}x(t)\in F(t,x(t)), & t\in J=[t_0,T], \ n-1<\beta<n, \vspace{0,3cm} \\
	x^{(k)}(\alpha)=a_k+\int_{t_0}^{\alpha}g_k(s,x(s))ds, & k=0,1,\ldots,n-1, \ \alpha \in (t_0,T),
	\end{array}%
	\right.  
	\end{equation*}
	where $F\colon J \times \mathbb{R} \to P(\mathbb{R})$ is a multivalued map, $P(\mathbb{R})$ is the family of all nonempty subsets of $\mathbb{R}, \ g_k \colon J \times \mathbb{R} \to \mathbb{R}$ is a given continuous function, $a_k \in \mathbb{R}$ and ${^C}D^\beta_{t_0}$ denotes the Caputo fractional
	derivative of order $\beta, \ n = [\beta] + 1, \ [\beta ]$ denotes the integer part of the real number $\beta.$
	
	To achieve our goals, we take advantage of fixed point theorems for multivalued mappings satisfying a new class of contractive conditions in the setting of complete metric spaces. We derive new fixed point results which extend and improve many results in the literature by means of this new class of contractions. We also supply some examples to support the new theory.
	\end{abstract}

\maketitle
\section{\textbf{Introduction}}
Fixed point theory is one of the most significant and beneficial instruments in mathematical analysis on account of the fact that it purveys sufficient and necessary conditions at finding the existence and uniqueness of a solution of mathematical and practical problems which can be reduced to an equivalent fixed point problem. In particular, Banach contraction principle, in which states that every contraction self-map on a complete metric space has a unique fixed point, has a variety of applications in many disciplines such as chemistry, physics, biology, computer science and many branches of mathematics. This fundamental principle have been generalized in two main directions; either by generalizing the domain of the mapping or by weakening the contractive condition or sometimes even both. Some of those were studied by Berinde \cite{ber}, Chatterja \cite{chat}, \'{C}iri\'{c} \cite{cir,ciric}, Hardy and Rogers \cite{hardy}, Kannan \cite{kannan}, Reich \cite{reich}, Suzuki \cite{suzuki} and Zamfirescu \cite{zam}. In other respects, Nadler \cite{nadler} extended Banach contraction principle from self-maps to multivalued mappings by using the notion of the Hausdorff metric. The theory of multivalued mappings has various applications in optimal control theory, convex optimization, integral inclusions, fractional differential inclusions,  economics and game theory. Recently, Jleli and Samet \cite{jleli} introduced a new type of contractive self-maps known as $\vartheta$-contaction and proved the existence and uniqueness of fixed points for these types of mappings by using a new technique of proof via the properties of the functions $\vartheta$. After then, several researchers extended the results in \cite{jleli} to multivalued mappings in different directions, see for example, Nastasi annd Vetro \cite{nas}, Pansuwan {\it et al}. \cite{pan} and Vetro \cite{vetro}.

In this study, we introduce a new class of contractions for multivalued mappings by weakening the conditions on $\vartheta$ and by using auxilary functions. Using this new type of contractions, we establish fixed point theorems for multivalued mappings on complete metric spaces, which improve and
extend the results in \cite{ber,chat,cir,ciric,hardy,jleli,kannan,nadler,reich,vetro,zam} and many others in the literature. Some examples is constructed in order to illustrate the generality of our results. As an application of the obtained results, sufficient conditions are discussed to ensure the existence of solutions of the following nonlocal integral boundary value problem of Caputo type fractional differential inclusions:
\begin{equation}\label{frac}
\left\{
\begin{array}{ll} 
{^C}D^\beta_{t_0}x(t)\in F(t,x(t)), & t\in J=[t_0,T], \ n-1<\beta<n, \vspace{0,3cm} \\
x^{(k)}(\alpha)=a_k+\int_{t_0}^{\alpha}g_k(s,x(s))ds, & k=0,1,\ldots,n-1, \ \alpha \in (t_0,T),
\end{array}%
\right.  
\end{equation}
where $F\colon J \times \mathbb{R} \to P(\mathbb{R})$ is a multivalued map, $P(\mathbb{R})$ is the family of all nonempty subsets of $\mathbb{R}, \ g_k \colon J \times \mathbb{R} \to \mathbb{R}$ is a given continuous function, $a_k \in \mathbb{R}$ and ${^C}D^\beta_{t_0}$ denotes the Caputo fractional
derivative of order $\beta, \ n = [\beta] + 1, \ [\beta ]$ denotes the integer part of the real number $\beta.$

\section{\textbf{Preliminaries and Background}}
Here, we recollect some basic definitions, lemmas, notations and
some known theorems which are helpful for understanding of this
paper. In the sequel, we will indicate the set of all non-negative real numbers and the set of all natural numbers by the letters $\mathbb{R^+} \mbox{ and } \mathbb{N}$, respectively. Let $ (X,d) $ be a metric space and denote the family of nonempty, closed and bounded subsets of $ X$ by $ CB(X).$ For $ A,B
\in CB(X),$ define $H\colon CB(X) \times CB(X) \to \mathbb{R^+}$ by
\begin{align*}
H(A,B)=\max \left\lbrace \sup_{a\in A} \ d(a,B), \ \sup_{b \in B} \ d(b,A)\right\rbrace
\end{align*}
where $ d(a,B)=\inf\left\lbrace d(a,x)\!: \ x \in B \right\rbrace $.
Such a function $H$ is called the Pompeiu-Hausdorff metric induced
by $d,$ for more details, see \cite{berinde}. Also,
denote the family of nonempty and closed subsets of $X$ by $CL(X)$
and the family of nonempty and compact subsets of $X$ by $K(X)$.
Note that $H\colon CL(X) \times CL(X) \to [0,\infty]$ is a
generalized Pompeiu-Hausdorff metric, that is, $H(A,B)=\infty$ if
$\max \left\lbrace \sup_{a\in A} \ d(a,B), \ \sup_{b \in B} \
d(b,A)\right\rbrace$ does not exist in $\mathbb{R}$.

\begin{lemma} [\cite{vetro}] \label{lm1}
	Let $ (X,d) $ be a metric space and $ A,B \in CL(X)$ with $H(A,B)>0.$ Then, for each $h > 1$ and for each $a \in A,$ there exists $b=b(a) \in B$ such that \linebreak $d(a,b)<hH(A,B).$
\end{lemma}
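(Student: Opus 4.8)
The plan is to reduce the statement to the defining property of the infimum. First I would fix $h>1$ and an arbitrary $a\in A$, and record that, directly from the definition of the Pompeiu--Hausdorff metric, $d(a,B)\le \sup_{a'\in A} d(a',B)\le H(A,B)$. Since $h>1$ and $H(A,B)>0$ by hypothesis, this yields the \emph{strict} inequality $d(a,B)\le H(A,B)<hH(A,B)$.

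Next I would invoke the standard characterization of the infimum: since $d(a,B)=\inf\{d(a,x):x\in B\}$ and $hH(A,B)$ is a real number strictly larger than this infimum, the number $hH(A,B)$ cannot be a lower bound for the set $\{d(a,x):x\in B\}$, so there exists $b\in B$ (depending on $a$, and also on $h$) with $d(a,b)<hH(A,B)$, which is exactly the claimed conclusion. Equivalently, one may argue by contradiction: if $d(a,b)\ge hH(A,B)$ held for every $b\in B$, then $d(a,B)\ge hH(A,B)>H(A,B)\ge d(a,B)$, which is absurd.

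I expect no genuine obstacle in this argument; the only point requiring attention is to make sure the comparison between $d(a,B)$ and $hH(A,B)$ is \emph{strict}, and this is precisely where both hypotheses, $h>1$ and $H(A,B)>0$, are used. They are essential: if $H(A,B)=0$ the conclusion would demand $d(a,b)<0$, which is impossible, so the statement genuinely fails without that assumption. I also note that closedness of $A$ and $B$ plays no role in this particular statement and is retained only for consistency with the paper's ambient setting of $CL(X)$.
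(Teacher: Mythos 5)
Your argument is correct: the paper itself states this lemma without proof (citing Vetro), and your reduction to the defining property of the infimum via $d(a,B)\le H(A,B)<hH(A,B)$, with the hypotheses $h>1$ and $H(A,B)>0$ supplying the strictness, is exactly the standard argument that proof would use. The only cosmetic point is that for $A,B\in CL(X)$ the generalized metric may give $H(A,B)=\infty$, in which case the middle comparison $H(A,B)<hH(A,B)$ degenerates, but then any $b\in B$ satisfies $d(a,b)<\infty=hH(A,B)$, so the conclusion holds trivially and nothing essential is lost.
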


Following the results in \cite{jleli}, Vetro \cite{vetro} established fixed point results for multivalued mappings.

\begin{definition}[\cite{jleli,vetro}]\rm
	Let $(X,d) $ be a metric space. A map $T\colon X \to CL(X)$ is called a {\it weak $\vartheta$-contraction}, if there exist $k \in (0,1)$ and $\vartheta \in \Theta$ such that
	\begin{equation} \label{theta}
	\vartheta(H(Tx,Ty)) \leq [\vartheta(d(x,y))]^{k},
	\end{equation}
	for all $x,y \in X$ with $H(Tx,Ty)>0$, where $\Theta$ is the set of
	functions $\vartheta\colon (0,\infty) \to (1,\infty)$ satisfying the
	following conditions:
	\begin{enumerate}
		\item[$(\vartheta1)$] $\vartheta $ is non-decreasing; 
		\item[$(\vartheta2)$] for each sequence $\{t_{n}\} \subset (0,\infty),$ \ \ $\lim_{n \to \infty} \ \vartheta (t_{n})=1$ if and only if $\lim_{n \to \infty} \ t_{n}=0;$
		\item[$(\vartheta3)$] there exist $r \in (0,1)$ and $\lambda \in (0,\infty]$ such that $\lim_{t \to 0^{+}} \dfrac{\vartheta (t)-1}{t^{r}}=\lambda.$
	\end{enumerate}
\end{definition}

The following functions $\vartheta_{i}\colon (0,\infty) \to
(1,\infty)$ for $i \in \left\lbrace1,2 \right\rbrace ,$ are the
elements of $\Theta.$ Furthermore, substituting in (\ref{theta})
these functions, we obtain some contractions known in the
literature: for all $x,y \in X$ with $H(Tx,Ty)>0,$
\begin{align*}
\vartheta_{1}&(t)=e^{\sqrt{t}}, \ \ \ \ \ \ \ \ \ H(Tx,Ty) \le k^{2}d(x,y), \\
\vartheta_{2}&(t)=e^{\sqrt{te^{t}}}, \ \ \ \ \ \ \  \frac{H(Tx,Ty)}{d(x,y)}e^{H(Tx,Ty)-d(x,y)} \le k^{2}.
\end{align*}

\begin{theorem} [\cite{vetro}] \label{thvetro}
	Let $(X,d)$ be a complete metric space and $T\colon X \to K(X)$ be a weak $\vartheta$-contraction. Then $T$ has a fixed point, that is, there exists a point $u \in X$ such that $u \in Tu.$
\end{theorem}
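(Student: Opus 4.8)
The plan is to construct a Picard-type orbit and show it is Cauchy, then use completeness and compactness of the values $Tx$ to pass to the limit. First I would fix $x_0 \in X$ arbitrarily and pick $x_1 \in Tx_0$; if $x_1 \in Tx_1$ we are done, so assume not, whence $d(x_1,Tx_1)>0$ and in particular $H(Tx_0,Tx_1)>0$. Since $Tx_1 \in K(X)$ is compact, the infimum defining $d(x_1,Tx_1)$ is attained, so there is $x_2 \in Tx_1$ with $d(x_1,x_2)=d(x_1,Tx_1) \le H(Tx_0,Tx_1)$. Iterating, I obtain a sequence $\{x_n\}$ with $x_{n+1} \in Tx_n$ and $d(x_n,x_{n+1}) \le H(Tx_{n-1},Tx_n)$ for all $n$, where at each stage I may assume $x_n \neq x_{n+1}$ (otherwise a fixed point has already appeared) and hence $H(Tx_{n-1},Tx_n)>0$.

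Next I would feed this into the contractive inequality \eqref{theta}. Monotonicity $(\vartheta1)$ combined with $d(x_n,x_{n+1}) \le H(Tx_{n-1},Tx_n)$ gives $\vartheta(d(x_n,x_{n+1})) \le \vartheta(H(Tx_{n-1},Tx_n)) \le [\vartheta(d(x_{n-1},x_n))]^k$. A straightforward induction then yields
\begin{equation*}
1 < \vartheta(d(x_n,x_{n+1})) \le [\vartheta(d(x_0,x_1))]^{k^n}.
\end{equation*}
Letting $n \to \infty$ and using $k \in (0,1)$ forces $\vartheta(d(x_n,x_{n+1})) \to 1$, so by $(\vartheta2)$ we get $d(x_n,x_{n+1}) \to 0$. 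To upgrade this to the Cauchy property I would invoke $(\vartheta3)$: there are $r \in (0,1)$ and $\lambda \in (0,\infty]$ with $\lim_{t\to 0^+}(\vartheta(t)-1)/t^r = \lambda$. A case analysis on whether $\lambda$ is finite or infinite shows that in either case there exist a constant $C>0$ and an index $N$ such that $d(x_n,x_{n+1}) \le C/n^{1/r}$ for $n \ge N$ (using $n k^n (\vartheta(d(x_n,x_{n+1}))-1) \to 0$). Since $1/r > 1$, the series $\sum d(x_n,x_{n+1})$ converges, so $\{x_n\}$ is Cauchy.

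Finally, by completeness $x_n \to u$ for some $u \in X$. To see $u \in Tu$, note $d(x_{n+1},Tu) \le H(Tx_n,Tu)$; I would argue that $H(Tx_n,Tu) \to 0$ — if $H(Tx_n,Tu)>0$ infinitely often, then $\vartheta(H(Tx_n,Tu)) \le [\vartheta(d(x_n,u))]^k$, and since $d(x_n,u)\to 0$ the right side tends to $1$ by $(\vartheta2)$, forcing $H(Tx_n,Tu)\to 0$; the remaining case is trivial. Hence $d(u,Tu) = \lim_n d(x_{n+1},Tu) = 0$, and since $Tu$ is compact (in particular closed) this gives $u \in Tu$. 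The main obstacle I anticipate is the Cauchy step: turning the geometric-in-$\vartheta$ decay into summable decay of the distances requires the delicate use of $(\vartheta3)$ and the $n k^n(\cdots)\to 0$ trick from Jleli–Samet, and one must handle the $\lambda = \infty$ case separately from $\lambda < \infty$. A minor subtlety is justifying at each step that one may assume $x_n \neq x_{n+1}$, i.e. that if equality ever occurs the theorem is already proved.
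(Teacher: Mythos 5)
Your argument is correct and follows essentially the same scheme the paper uses (for its generalization, Theorem \ref{t1}): pick $x_{n+1}\in Tx_n$ attaining $d(x_n,Tx_n)$ by compactness, iterate the contraction to get $1<\vartheta(d(x_n,x_{n+1}))\le[\vartheta(d(x_0,x_1))]^{k^n}$, use $(\vartheta3)$ and the $n[d(x_n,x_{n+1})]^r\to 0$ trick to obtain $d(x_n,x_{n+1})\le n^{-1/r}$ and hence the Cauchy property, then pass to the limit and use closedness of $Tu$. The only difference is cosmetic: since the statement assumes $\vartheta\in\Theta$, you may legitimately invoke $(\vartheta2)$ to conclude $d(x_n,x_{n+1})\to 0$ and $H(Tx_n,Tu)\to 0$, whereas the paper's Theorem \ref{t1} (working in $\Omega$ without $(\vartheta2)$) substitutes Lemma \ref{l1} and Remark \ref{rem} for these steps.
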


Note that Theorem \ref{thvetro} is invalid, if we take $CB(X)$
instead of $K(X)$. In \cite{vetro}, Vetro showed that
Theorem \ref{thvetro} is still true for $T\colon X \to CB(X),$
whenever $\vartheta \in \Theta$ is right continuous.

We will not be need the condition $(\vartheta2)$ in our results. Thence, we denote by $\Omega$ the set of all fuctions $\vartheta$ satisfying the conditions $(\vartheta1)$ and $(\vartheta3).$ We can define the functions which belong to the set $\Omega$ but not to $\Theta$ as shown in the following examples.

\begin{example}
Define $\vartheta\colon (0,\infty) \to (1,\infty)$ with $\vartheta(t)=e^{\sqrt{t+1}}.$ Evidently $\vartheta$ satisfies $(\vartheta1)$ and since $\lim_{t \to 0^{+}} \, (e^{\sqrt{t+1}}-1)/t^{r}=\infty$ for $r \in (0,1),$ also $(\vartheta3).$ However, $\vartheta$ doesn't satisfy the condition $(\vartheta2).$ Indeed, consider $t_{n}=\frac{1}{n}$ for all $ n \in \mathbb{N},$ then $\lim_{n \to \infty} \ t_{n}=0$ and
$\lim_{n \to \infty} \ \vartheta (t_{n})=e\neq 1.$ Consequently, $\vartheta \in \Omega$ while $\vartheta \notin \Theta.$
\end{example}

\begin{example}
	Let $a>1$ and $\vartheta(t)=a+\ln(\sqrt{t+1}).$ It can easily be seen that $\vartheta$ satisfies the conditions $(\vartheta1)$ and $(\vartheta3).$ But if we take $t_{n}=\frac{1}{n}$ for all $ n \in \mathbb{N},$ then $\lim_{n \to \infty} \, t_{n}=0$ and
	$\lim_{n \to \infty} \ \vartheta (t_{n})=a>1.$ Hence, $\vartheta \in \Omega$ and $\vartheta \notin \Theta.$
\end{example}
The next lemma will help us to make up for the lack of the condition $(\vartheta2)$ in the proofs.
\begin{lemma} \label{l1}
Let $\vartheta\colon (0,\infty) \to (1,\infty)$ be a non-decreasing function and $\left\lbrace t_{n} \right\rbrace \subset (0,\infty)$ a decreasing sequence such that $\lim_{n \to \infty} \ \vartheta (t_{n})=1.$ Then, we have $\lim_{n \to \infty} \ t_{n}=0.$
\end{lemma}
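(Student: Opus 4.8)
The plan is to exploit monotonicity twice. First I would observe that a decreasing sequence in $(0,\infty)$ is bounded below by $0$, hence convergent; call its limit $L := \lim_{n \to \infty} t_{n} = \inf_{n} t_{n} \ge 0$. The whole content of the lemma is then to rule out the case $L > 0$.

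So suppose, towards a contradiction, that $L > 0$. Since $t_{n} \ge L$ for every $n$ and $\vartheta$ is non-decreasing (this is exactly condition $(\vartheta1)$), we obtain $\vartheta(t_{n}) \ge \vartheta(L)$ for all $n$. Passing to the limit in this inequality gives $1 = \lim_{n \to \infty} \vartheta(t_{n}) \ge \vartheta(L)$. But $\vartheta$ takes values in $(1,\infty)$, so $\vartheta(L) > 1$, which is the desired contradiction.

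Hence $L = 0$, i.e. $\lim_{n \to \infty} t_{n} = 0$, proving the claim. It is worth noting that condition $(\vartheta3)$ plays no role here, and that—unlike in condition $(\vartheta2)$—mere monotonicity of $\vartheta$ is enough precisely because the sequence is assumed decreasing: the single value $\vartheta(L)$ furnishes a uniform positive ``gap above $1$'' for the entire tail $\vartheta(t_{n})$. There is essentially no technical obstacle; the only point requiring a modicum of care is the opening remark that a positive decreasing sequence genuinely has a (nonnegative, a priori possibly zero) limit, so that the passage to the limit in the inequality $\vartheta(t_{n}) \ge \vartheta(L)$ is legitimate.
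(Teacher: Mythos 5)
Your proof is correct and follows essentially the same route as the paper's: let $L=\lim_{n\to\infty}t_n\ge 0$, assume $L>0$, use monotonicity to get $\vartheta(t_n)\ge\vartheta(L)$, and pass to the limit to obtain $1\ge\vartheta(L)$, contradicting that $\vartheta$ takes values in $(1,\infty)$. No discrepancies to report.
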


\begin{proof}
Since the sequence $\left\lbrace t_{n} \right\rbrace$ is decreasing, there exists $t \ge 0$ such that $\lim_{n \to \infty}  t_{n}=t.$ Suppose that $t>0.$ Considering the fact that $\vartheta $ is non-decreasing and $t_{n} \ge t,$ we get $\vartheta(t_{n}) \ge \vartheta(t),$ for all $n \ge 0.$ Taking the limit as $n \to \infty$ in the last inequality, we deduce $1=\lim_{n \to \infty} \ \vartheta(t_{n}) \ge \vartheta(t)$ which contradicts by the definition of $\vartheta, $ hence $t=0.$
\end{proof}

Now, following the lines in \cite{cons}, we denote by $\mathcal{P}$ the set of all continuous mappings \linebreak $\rho \colon (\mathbb{R^+})^5 \to \mathbb{R^+} $ satisfying the following conditions:
\begin{enumerate}
	\item[$(\rho1)$] $\rho(1,1,1,2,0),\rho(1,1,1,0,2),\rho(1,1,1,1,1) \in (0,1];$ 
	\item[$(\rho2)$] $\rho$ is sub-homogeneous, that is, for all $(x_{1},x_{2},x_{3},x_{4},x_{5}) \in (\mathbb{R^+})^5 \mbox{ and } \alpha \ge 0,$ we have $\rho(\alpha x_{1},\alpha x_{2},\alpha x_{3},\alpha x_{4}, \alpha x_{5})\le \alpha \rho( x_{1},x_{2},x_{3},x_{4},x_{5});$
	\item[$(\rho3)$] $\rho$ is a non-decreasing function, that is, for $x_i, y_i \in\mathbb{R^+}, \ x_i \le y_i, \ i = 1,\ldots,5,$ we have \[\rho( x_{1}, x_{2}, x_{3}, x_{4},  x_{5})\le  \rho( y_{1},y_{2},y_{3},y_{4},y_{5})\] and if $x_i, y_i \in\mathbb{R^+}, \ x_i < y_i, \ i = 1,\ldots,4,$ then
	\[\rho( x_{1}, x_{2}, x_{3}, x_{4},  0)<  \rho( y_{1},y_{2},y_{3},y_{4},0) \mbox{  and  } \rho( x_{1}, x_{2}, x_{3},0, x_{4},)<  \rho( y_{1},y_{2},y_{3},0,y_{4}).\]
\end{enumerate} 
Then we have the next result.
\begin{lemma} \label{l2}
If $\rho \in \mathcal{P} \mbox{ and } u,v\in\mathbb{R^+}$ are such that \[u < \max\left\lbrace \rho(v,v,u,v+u,0),\rho(v,v,u,0,v+u),\rho(v,u,v,v+u,0),\rho(v,u,v,0,v+u) \right\rbrace, \]
then $u < v.$
\end{lemma}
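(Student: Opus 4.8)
The plan is to argue by contradiction. Suppose, contrary to the claim, that $v \le u$; I will show this forces $\max\{\rho(v,v,u,v+u,0),\rho(v,v,u,0,v+u),\rho(v,u,v,v+u,0),\rho(v,u,v,0,v+u)\} \le u$, contradicting the hypothesis $u < \max\{\dots\}$, and hence concluding $u < v$.

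First I would reduce all four $\rho$-values to one of two ``normalized'' quantities. Since $v \le u$ we also have $v + u \le 2u$, so the monotonicity condition $(\rho3)$ yields
\begin{align*}
\rho(v,v,u,v+u,0) \le \rho(u,u,u,2u,0), &\qquad \rho(v,u,v,v+u,0) \le \rho(u,u,u,2u,0),\\
\rho(v,v,u,0,v+u) \le \rho(u,u,u,0,2u), &\qquad \rho(v,u,v,0,v+u) \le \rho(u,u,u,0,2u).
\end{align*}
Next, writing $(u,u,u,2u,0) = (u\cdot 1,u\cdot 1,u\cdot 1,u\cdot 2,u\cdot 0)$ and similarly for the other tuple, sub-homogeneity $(\rho2)$ with $\alpha = u$ gives $\rho(u,u,u,2u,0) \le u\,\rho(1,1,1,2,0)$ and $\rho(u,u,u,0,2u) \le u\,\rho(1,1,1,0,2)$. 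Finally $(\rho1)$ guarantees $\rho(1,1,1,2,0),\rho(1,1,1,0,2) \in (0,1]$, so both of these upper bounds are $\le u$. Combining the chain, every term inside the maximum is $\le u$, hence the maximum itself is $\le u$, contradicting $u < \max\{\dots\}$. Therefore $u < v$.

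I do not expect a genuine obstacle here: the argument is just an application of $(\rho1)$, $(\rho2)$, $(\rho3)$ in the reverse order to their listing. The only points needing a little care are the degenerate case $u = 0$, where $v \le u$ forces $v = 0$ and sub-homogeneity with $\alpha = 0$ collapses each $\rho$-term to $0$, again contradicting $0 < \max\{\dots\}$; and the bookkeeping of which property is used at each step — in particular, only the non-strict part of $(\rho3)$ is needed in this lemma, the strict monotonicity clauses being reserved for the fixed point theorems that follow.
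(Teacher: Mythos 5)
Your proof is correct and follows essentially the same route as the paper: assume $v\le u$, use monotonicity $(\rho3)$, sub-homogeneity $(\rho2)$ with $\alpha=u$, and $(\rho1)$ to bound each term of the maximum by $u$, contradicting the hypothesis. The only cosmetic difference is that the paper treats one representative term ``without loss of generality'' while you bound all four explicitly and note the degenerate case $u=0$.
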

\begin{proof}
	Without loss of generality, we can suppose that $u < \rho(v,v,u,v+u,0).$ If $v\le u,$ then
	\[u < \rho(v,v,u,v+u,0)\le \rho(u,u,u,2u,0) \le u\rho(1,1,1,2,0) \le u \]
which is a contradiction. Thus, we deduce that $u < v.$
\end{proof}
We are now ready to give the following definition.
\begin{definition}
	Let $(X,d) $ be a metric space. A multivalued mapping $T\colon X \to CL(X)$ is called a {\it $\vartheta_\rho$-contraction}, if there exist $\vartheta \in \Omega, \ \rho \in \mathcal{P}  \mbox{ and } k\in (0,1)$ such that
	\begin{equation} \label{a1}
	\vartheta(H(Tx,Ty)) \leq [\vartheta(\rho(d(x,y),d(x,Tx),d(y,Ty),d(x,Ty),d(y,Tx)))]^{k},
	\end{equation}
	for all $x,y \in X$ with $H(Tx,Ty)>0$.
\end{definition}

\begin{remark} \label{rem}
	Let $(X,d)$ be a metric space. If $T\colon X \to CL(X)$ is a {\it 
		$\vartheta_\rho$-contraction}, then by (\ref{a1}), we get
	\begin{align*}
	\ln\vartheta(H(Tx,Ty))&\le k \ln\vartheta(\rho(d(x,y),d(x,Tx),d(y,Ty),d(x,Ty),d(y,Tx))) \\
	&<\ln\vartheta(\rho(d(x,y),d(x,Tx),d(y,Ty),d(x,Ty),d(y,Tx))).
	\end{align*}
	Since $\vartheta$ is non-decreasing, we obtain 
	\begin{equation*}
	H(Tx,Ty) < \rho(d(x,y),d(x,Tx),d(y,Ty),d(x,Ty),d(y,Tx)),
	\end{equation*}
	for all $x,y \in X$ with $Tx \neq Ty.$ This implies that 
	\begin{equation*}
	H(Tx,Ty) \le \rho(d(x,y),d(x,Tx),d(y,Ty),d(x,Ty),d(y,Tx)), \mbox{ for all } x,y \in X.
	\end{equation*}
\end{remark}
\section{\textbf{Main Results}}
The first result of this study is as follows.
\begin{theorem}\label{t1}
	Let $(X,d)$ be a complete metric space and $T\colon X \to K(X)$ a 
	$\vartheta_\rho$-contraction. Then $T$ has a fixed point.
\end{theorem}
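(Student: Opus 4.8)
The plan is to follow the Nadler--Jleli--Samet scheme: build a Picard-type iteration, show it is Cauchy, and identify its limit as a fixed point, using the auxiliary function $\rho$ together with Lemmas \ref{l1} and \ref{l2} to make up for the missing condition $(\vartheta2)$. Fix $x_{0}\in X$ and, since the values of $T$ are compact, inductively choose $x_{n+1}\in Tx_{n}$ with $d(x_{n},x_{n+1})=d(x_{n},Tx_{n})$. If $x_{n+1}=x_{n}$ for some $n$ then $x_{n}\in Tx_{n}$ and we are done, so assume $d_{n}:=d(x_{n},x_{n+1})>0$ for all $n$; since $x_{n+1}\in Tx_{n}$ this also forces $H(Tx_{n-1},Tx_{n})>0$, so \eqref{a1} is available along the sequence (no use of Lemma \ref{lm1} is needed in the compact-valued case).

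Using $x_{n}\in Tx_{n-1}$ one has $d(x_{n},Tx_{n-1})=0$, $d(x_{n-1},Tx_{n-1})=d_{n-1}$, $d(x_{n},Tx_{n})=d_{n}$ and $d(x_{n-1},Tx_{n})\le d_{n-1}+d_{n}$. Since $[\vartheta(t)]^{k}<\vartheta(t)$ and $\vartheta$ is non-decreasing, \eqref{a1} (via Remark \ref{rem} and $(\rho3)$) gives the strict estimate $d_{n}\le H(Tx_{n-1},Tx_{n})<\rho(d_{n-1},d_{n-1},d_{n},d_{n-1}+d_{n},0)$, so Lemma \ref{l2} yields $d_{n}<d_{n-1}$; thus $\{d_{n}\}$ is strictly decreasing. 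Feeding this back into the inequality $\vartheta(d_{n})\le\vartheta(H(Tx_{n-1},Tx_{n}))\le[\vartheta(\rho(d_{n-1},d_{n-1},d_{n},d_{n-1}+d_{n},0))]^{k}$ and bounding, by $(\rho2)$ and $(\rho1)$, $\rho(d_{n-1},d_{n-1},d_{n},d_{n-1}+d_{n},0)\le d_{n-1}\rho(1,1,1,2,0)\le d_{n-1}$, I obtain $\vartheta(d_{n})\le[\vartheta(d_{n-1})]^{k}\le\cdots\le[\vartheta(d_{0})]^{k^{n}}$. Hence $\vartheta(d_{n})\to1$, and since $\{d_{n}\}$ is decreasing, Lemma \ref{l1} gives $d_{n}\to0$.

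To show $\{x_{n}\}$ is Cauchy I would run the usual argument based on $(\vartheta3)$: there are $r\in(0,1)$, a constant $A>0$ and an index $n_{0}$ with $A\,d_{n}^{r}\le\vartheta(d_{n})-1\le[\vartheta(d_{0})]^{k^{n}}-1$ for $n\ge n_{0}$ (splitting the cases $\lambda<\infty$ and $\lambda=\infty$); since $n\big([\vartheta(d_{0})]^{k^{n}}-1\big)\to0$ as $k\in(0,1)$, we get $n\,d_{n}^{r}\to0$, hence $d_{n}\le n^{-1/r}$ eventually, and as $1/r>1$ the series $\sum_{n}d_{n}=\sum_{n}d(x_{n},x_{n+1})$ converges, so $\{x_{n}\}$ is Cauchy. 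By completeness $x_{n}\to u$ for some $u\in X$.

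It remains to prove $u\in Tu$. If $Tx_{n}=Tu$ for infinitely many $n$, then $x_{n+1}\in Tx_{n}=Tu$ along a subsequence and closedness of $Tu$ gives $u\in Tu$; otherwise $H(Tx_{n},Tu)>0$ for all large $n$, and \eqref{a1} and Remark \ref{rem} apply with $x=x_{n}$, $y=u$. Using $d(x_{n+1},Tu)\le H(Tx_{n},Tu)$, $d(x_{n},Tx_{n})=d_{n}\to0$, $d(u,Tx_{n})\le d(u,x_{n+1})\to0$, $d(x_{n},Tu)\le d(x_{n},u)+d(u,Tu)$ with $d(x_{n},Tu)\to d(u,Tu)$, the continuity of $\rho$, and sub-homogeneity $(\rho2)$ together with $(\rho1)$ (which give $\rho(0,0,d(u,Tu),d(u,Tu),0)\le d(u,Tu)\,\rho(0,0,1,1,0)\le d(u,Tu)$), one is led to $d(u,Tu)\le\rho\big(0,0,d(u,Tu),d(u,Tu),0\big)$. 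I expect the crux of the whole proof to be upgrading this to $d(u,Tu)=0$: one must assume $d(u,Tu)>0$ and derive a contradiction by combining the strict inequality of Remark \ref{rem}, the strict monotonicity built into $(\rho3)$, and the monotonicity of $\vartheta$ — e.g.\ by passing to $\limsup$ in $\vartheta\big(d(u,Tu)-\varepsilon\big)\le[\vartheta(\rho(\cdots))]^{k}$ and exploiting $k\in(0,1)$ and $\vartheta>1$. Making this limit passage rigorous without $(\vartheta2)$ and without continuity of $\vartheta$ is the main obstacle.
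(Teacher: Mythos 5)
Up to the convergence of the iterates your argument is correct and is essentially the paper's own: the same compactness-based choice of $x_{n+1}\in Tx_{n}$ with $d(x_{n},x_{n+1})=d(x_{n},Tx_{n})$, the same use of Remark \ref{rem} together with Lemma \ref{l2} to get $d_{n}<d_{n-1}$, the same estimate $1<\vartheta(d_{n})\le[\vartheta(d_{0})]^{k^{n}}$, Lemma \ref{l1} to conclude $d_{n}\to 0$, and the standard $(\vartheta3)$ argument giving $d_{n}\le n^{-1/r}$ eventually, hence the Cauchy property and a limit $u$.

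The genuine gap is the final step, which you explicitly leave open. You arrive, exactly as the paper does, at the limiting inequality $d(u,Tu)\le\rho\bigl(0,0,d(u,Tu),d(u,Tu),0\bigr)$, but you never derive a contradiction from it: you only declare that upgrading this to $d(u,Tu)=0$ is ``the main obstacle'' and sketch a vague $\limsup$ manipulation of $\vartheta$. Since identifying $u$ as a fixed point is the whole point of the theorem, the proof is incomplete as submitted. The paper closes this step without touching $\vartheta$ at all: Remark \ref{rem} has already removed $\vartheta$ from the picture, the limit passage uses only the continuity of $\rho$ and the metric, and the contradiction is obtained from Lemma \ref{l2} applied with $v=0$ (if $d(u,Tu)>0$ satisfied the hypothesis of Lemma \ref{l2} with $v=0$, then $d(u,Tu)<0$). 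So your stated worry about the lack of $(\vartheta2)$ and of continuity of $\vartheta$ is aimed at the wrong object — no property of $\vartheta$ beyond what was already used is needed here. What you did correctly sense, and what a careful write-up should confront, is a strictness issue in that last application: the limit only yields a non-strict inequality, while the hypothesis of Lemma \ref{l2} is strict, and for instance with $\rho=\max$ one has $\rho(0,0,c,c,0)=c$, so $c\le\rho(0,0,c,c,0)$ alone is not contradictory. But recording the difficulty is not resolving it; your proposal supplies neither the paper's Lemma \ref{l2} argument nor any substitute for it, so the fixed-point step is missing.
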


\begin{proof}
	Let $x_{0}$ be an arbitrary point of $X$ and $x_{1}\in Tx_{0}.$ If $x_{0}=x_{1}$ or $x_{1}\in Tx_{1},$ then $x_{1}$ is a fixed point of $T$ and so the proof is completed. Because of this, assume that $x_{0}\neq x_{1}$ and $x_{1}\notin Tx_{1},$ then $d(x_{1},Tx_{1})>0$ and hence $H(Tx_{0},Tx_{1})>0.$ Since $Tx_{1}$ is compact, there exists $x_{2}\in Tx_{1}$ such that $d(x_{1},x_{2})=d(x_{1},Tx_{1}).$ Bearing in mind that the functions $\vartheta \mbox{ and } \rho$ are non-decreasing, by \eqref{a1}, we have
	\begin{align} \label{a2}
	\vartheta(d(x_{1},x_{2}))&=\vartheta(d(x_{1},Tx_{1})) \leq \vartheta(H(Tx_{0},Tx_{1})) \nonumber \\
	&\leq [\vartheta(\rho(d(x_{0},x_{1}),d(x_{0},Tx_{0}),d(x_{1},Tx_{1}),d(x_{0},Tx_{1}),d(x_{1},Tx_{0})))]^{k}\nonumber\\
	&\le[\vartheta(\rho(d(x_{0},x_{1}),d(x_{0},x_{1}),d(x_{1},x_{2}),d(x_{0},x_{1})+d(x_{1},x_{2}),0))]^{k}.
	\end{align}
	By Remark \ref{rem}, this inequality implies that
	\[d(x_{1},x_{2})< \rho(d(x_{0},x_{1}),d(x_{0},x_{1}),d(x_{1},x_{2}),d(x_{0},x_{1})+d(x_{1},x_{2}),0).\]
	From Lemma \ref{l2}, we get that $d(x_{1},x_{2}) < d(x_{0},x_{1}).$ Thus, using the properties of $\vartheta \mbox{ and } \rho$ in \eqref{a2}, we infer
	\begin{align*}
	\vartheta(d(x_{1},x_{2}))&\le  [\vartheta(\rho(d(x_{0},x_{1}),d(x_{0},x_{1}),d(x_{1},x_{2}),d(x_{0},x_{1})+d(x_{1},x_{2}),0))]^{k} \\
	&<[\vartheta(\rho(d(x_{0},x_{1}),d(x_{0},x_{1}),d(x_{0},x_{1}),2d(x_{0},x_{1}),0))]^{k}\\
	&\le[\vartheta(d(x_{0},x_{1})\rho(1,1,1,2,0))]^{k} \le[\vartheta(d(x_{0},x_{1}))]^{k}.
	\end{align*}
	Following the previous procedures, we can assume that $x_{1}\neq x_{2}$ and $x_{2}\notin Tx_{2}.$ Then $d(x_{2},Tx_{2})>0,$ and so $H(Tx_{1},Tx_{2})>0.$ Since $Tx_{2}$ is compact, there exists $x_{3}\in Tx_{2}$ such that $d(x_{2},x_{3})=d(x_{2},Tx_{2}).$ Considering $(\vartheta1), \ (\rho3)$ and \eqref{a1}, we get
	\begin{align} \label{a3}
	\vartheta(d(x_{2},x_{3}))&=\vartheta(d(x_{2},Tx_{2})) \leq \vartheta(H(Tx_{1},Tx_{2})) \nonumber \\
	&\leq [\vartheta(\rho(d(x_{1},x_{2}),d(x_{1},Tx_{1}),d(x_{2},Tx_{2}),d(x_{1},Tx_{2}),d(x_{2},Tx_{1})))]^{k}\nonumber\\
	&\le[\vartheta(\rho(d(x_{1},x_{2}),d(x_{1},x_{2}),d(x_{2},x_{3}),d(x_{1},x_{2})+d(x_{2},x_{3}),0))]^{k},
	\end{align}
	follows by Remark \ref{rem} that
	\[d(x_{2},x_{3})<\rho(d(x_{1},x_{2}),d(x_{1},x_{2}),d(x_{2},x_{3}),d(x_{1},x_{2})+d(x_{2},x_{3}),0).\]
	Again from Lemma \ref{l2}, we obtain that $d(x_{2},x_{3}) < d(x_{1},x_{2}).$ Thereby, using the properties of $\vartheta \mbox{ and } \rho$ in \eqref{a3}, we deduce
	\begin{align*}
	\vartheta(d(x_{2},x_{3}))&\le  [\vartheta(\rho(d(x_{1},x_{2}),d(x_{1},x_{2}),d(x_{2},x_{3}),d(x_{1},x_{2})+d(x_{2},x_{3}),0))]^{k} \\
	&<[\vartheta(\rho(d(x_{1},x_{2}),d(x_{1},x_{2}),d(x_{1},x_{2}),2d(x_{1},x_{2}),0))]^{k}\\
	&\le[\vartheta(d(x_{1},x_{2})\rho(1,1,1,2,0))]^{k} \le[\vartheta(d(x_{1},x_{2}))]^{k}.
	\end{align*}
	Repeating this process, we can constitute a sequence $\left\lbrace x_{n} \right\rbrace \subset X$ such that $x_{n}\neq x_{n+1} \in Tx_{n}$ and
	\begin{align}\label{3}
	1<\vartheta(d(x_{n},x_{n+1}))<[\vartheta(d(x_{n-1},x_{n}))]^{k},
	\end{align}
	for all $ n \in \mathbb{N}.$ Letting $\sigma_{n}:=d(x_{n},x_{n+1})$ for all $n \in \mathbb{N} \cup \{0\},$ from \eqref{3}, we get
	\begin{equation}\label{6}
	1<\vartheta(\sigma_{n}) < [\vartheta(\sigma_{0})]^{k^{n}}, \ \ \ \ \ \text{for all } n \in \mathbb{N},
	\end{equation}%
	which implies that $\lim_{n\rightarrow \infty }  \vartheta(\sigma_{n})=1.$ On the other side, by the inequality \eqref{3}, we know that the sequence $\left\lbrace \sigma_{n} \right\rbrace $ is decreasing and hence we can apply Lemma \ref{l1} to get $\lim_{n\rightarrow \infty }  \sigma_{n}=0.$
	Now, we claim that $\left\{ x_{n}\right\} $ is a Cauchy sequence, for this, consider the condition $(\vartheta3).$ From $(\vartheta3),$ there exist $r \in (0,1)$ and $\lambda \in (0,\infty]$ such that
	\begin{equation}
	\underset{n\rightarrow \infty }{\lim } \frac{\vartheta (\sigma_{n})-1}{(\sigma_{n})^{r}}=\lambda.
	\end{equation}
	Take $\delta \in (0,\lambda).$ By the definition of limit, there exists $n_{0} \in \mathbb{N}$ such that
	\begin{equation*}
	[\sigma_{n}]^{r} \leq \delta^{-1} [\vartheta (\sigma_{n})-1], \ \ \ \ \text{for all} \ n>n_{0}.
	\end{equation*}
	Using (\ref{6}) and the above inequality, we deduce
	\begin{equation*}
	n[\sigma_{n}]^{r} \leq \delta^{-1} n([\vartheta (\sigma_{0})]^{k^{n}}-1), \ \ \ \ \text{for all} \ n>n_{0}.
	\end{equation*}
	This implies that
	\begin{align*}
	\underset{n\rightarrow \infty }{\lim } n[\sigma_{n}]^{r}=\underset{n\rightarrow \infty }{\lim } n[d(x_{n},x_{n+1})]^{r}=0.
	\end{align*}%
	Thence, there exists $n_{1} \in \mathbb{N}$ such that
	\begin{align}
	d(x_{n},x_{n+1}) \leq \frac{1}{n^{1/ r}}, \ \ \ \ \text{for all} \ n>n_{1}. \label{8}
	\end{align}%
	Let $m>n>n_{1}.$ Then, using the triangular inequality and (\ref{8}), we have
	\begin{align*}
	d(x_{n},x_{m}) \leq \sum\limits_{k=n}^{m-1} d(x_{k},x_{k+1}) \leq \sum\limits_{k=n}^{m-1} \frac{1}{k^{1/ r}} \leq \sum\limits_{k=n}^{\infty} \frac{1}{k^{1/ r}}
	\end{align*}%
	and hence $\left\{ x_{n}\right\} $ is a Cauchy sequence in $X.$ From the completeness of $(X,d)$, there exists $u \in X$ such that $x_{n} \to u$ as $n \to \infty.$ We now show that $u$ is a fixed point of $T.$ Suppose that $d(u,Tu)>0.$ Taking Remark \ref{rem} into account, we have
	\begin{align*}
	d(u,Tu) &\le d(u,x_{n+1})+d(x_{n+1},Tu) \\
	&\le d(u,x_{n+1})+H(Tx_{n},Tu)\\
	&\le d(u,x_{n+1})+ \rho(d(x_{n},u),d(x_{n},Tx_{n}),d(u,Tu),d(x_{n},Tu),d(u,Tx_{n}))\\
	&\le d(u,x_{n+1})+ \rho(d(x_{n},u),d(x_{n},x_{n+1}),d(u,Tu),d(x_{n},u)+d(u,Tu),d(u,x_{n+1})).
	\end{align*}
	Passing to limit as $n \to \infty$ in the above inequality, we obtain
	\begin{align*}
	d(u,Tu) \le \rho(0,0,d(u,Tu),0+d(u,Tu),0),
	\end{align*} 
	which implies by Lemma \ref{l2} that
	\begin{align*}
	0<d(u,Tu) <0,
	\end{align*}
	which is a contradiction. Hence $d(u,Tu)=0.$ Since $Tu$ is closed, we deduce that $u \in Tu.$
\end{proof}

In the next theorem, we replace $K(X)$ with $CB(X)$ by considering
an additional condition for the function $\vartheta$.

\begin{theorem}\label{t2}
	Let $(X,d)$ be a complete metric space and $T\colon X \to CB(X)$
	a $\vartheta_\rho$-contraction with right
	continuous function $\vartheta \in \Omega.$ Then $T$ has a fixed point.
\end{theorem}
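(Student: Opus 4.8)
The plan is to re-run the proof of Theorem \ref{t1} almost verbatim, the only genuine change being that compactness of the images is no longer available to realize the gaps $d(x_n,Tx_n)$ exactly; in its place I will select the next iterate by Lemma \ref{lm1}, and I will compensate for the factor $h>1$ it introduces by exploiting the right continuity of $\vartheta$. So I would fix once and for all a number $k_1\in(k,1)$, start from an arbitrary $x_0\in X$ and some $x_1\in Tx_0$, and observe that if $x_0=x_1$ or $x_1\in Tx_1$ the proof is finished; hence assume $x_0\ne x_1$ and $x_1\notin Tx_1$.

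The inductive step is the following. Given $x_n\ne x_{n+1}\in Tx_n$ with $x_{n+1}\notin Tx_{n+1}$ (otherwise $x_{n+1}$ is a fixed point and we stop), I want $x_{n+2}\in Tx_{n+1}$ with $x_{n+1}\ne x_{n+2}$, $d(x_{n+1},x_{n+2})<d(x_n,x_{n+1})$ and $\vartheta(d(x_{n+1},x_{n+2}))<[\vartheta(d(x_n,x_{n+1}))]^{k_1}$. First, since $x_{n+1}\in Tx_n$ we have $d(x_{n+1},Tx_{n+1})\le H(Tx_n,Tx_{n+1})$, so $H(Tx_n,Tx_{n+1})>0$; moreover $d(x_n,Tx_n)\le d(x_n,x_{n+1})$, $d(x_{n+1},Tx_n)=0$ and $d(x_n,Tx_{n+1})\le d(x_n,x_{n+1})+H(Tx_n,Tx_{n+1})$. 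Using the strict form of Remark \ref{rem} (valid because $Tx_n\ne Tx_{n+1}$) together with $(\rho3)$, one gets
\[
H(Tx_n,Tx_{n+1})<\rho(d(x_n,x_{n+1}),d(x_n,x_{n+1}),H(Tx_n,Tx_{n+1}),d(x_n,x_{n+1})+H(Tx_n,Tx_{n+1}),0),
\]
whence Lemma \ref{l2} gives $H(Tx_n,Tx_{n+1})<d(x_n,x_{n+1})$. Feeding this back and using $(\rho3)$, $(\rho2)$, $(\rho1)$ shows the argument of $\vartheta$ on the right of \eqref{a1} is $\le d(x_n,x_{n+1})$, so by $(\vartheta1)$ and $k<k_1$,
\[
\vartheta(H(Tx_n,Tx_{n+1}))\le[\vartheta(d(x_n,x_{n+1}))]^{k}<[\vartheta(d(x_n,x_{n+1}))]^{k_1}.
\]
Now right continuity of $\vartheta$ at $H(Tx_n,Tx_{n+1})$ lets me choose $h_n>1$ so close to $1$ that both $\vartheta(h_nH(Tx_n,Tx_{n+1}))<[\vartheta(d(x_n,x_{n+1}))]^{k_1}$ and $h_nH(Tx_n,Tx_{n+1})<d(x_n,x_{n+1})$, the latter being possible since $H(Tx_n,Tx_{n+1})<d(x_n,x_{n+1})$ strictly. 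Applying Lemma \ref{lm1} to $A=Tx_n$, $B=Tx_{n+1}$, $a=x_{n+1}$ produces $x_{n+2}\in Tx_{n+1}$ with $d(x_{n+1},x_{n+2})<h_nH(Tx_n,Tx_{n+1})$; as $x_{n+1}\notin Tx_{n+1}$ this distance is positive, so $x_{n+1}\ne x_{n+2}$ and $(\vartheta1)$ delivers the two displayed inequalities. Iterating, I obtain $\{x_n\}\subset X$ with $\sigma_n:=d(x_n,x_{n+1})$ strictly decreasing and $1<\vartheta(\sigma_n)<[\vartheta(\sigma_0)]^{k_1^{\,n}}$ for every $n$.

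From this point the argument is word-for-word the tail of the proof of Theorem \ref{t1}: the last display gives $\vartheta(\sigma_n)\to1$; since $\{\sigma_n\}$ is decreasing, Lemma \ref{l1} yields $\sigma_n\to0$; condition $(\vartheta3)$ then forces $n\sigma_n^{\,r}\to0$, hence $\sigma_n\le n^{-1/r}$ for all large $n$, and comparison with the convergent series $\sum n^{-1/r}$ shows $\{x_n\}$ is Cauchy, so $x_n\to u$ for some $u\in X$ by completeness. Finally, assuming $d(u,Tu)>0$ and using $d(x_{n+1},Tu)\le H(Tx_n,Tu)$ together with the last inequality of Remark \ref{rem}, passing to the limit gives $d(u,Tu)\le\rho(0,0,d(u,Tu),d(u,Tu),0)$, and Lemma \ref{l2} produces the contradiction $d(u,Tu)<0$; therefore $d(u,Tu)=0$ and, $Tu$ being closed, $u\in Tu$.

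The step I expect to be the crux is the selection of $x_{n+2}$: without compactness the factor $h_n>1$ in Lemma \ref{lm1} threatens to destroy the contraction estimate, and what saves it is the combination of two facts — the \emph{strict} inequality $H(Tx_n,Tx_{n+1})<d(x_n,x_{n+1})$, coming from Remark \ref{rem} and Lemma \ref{l2}, which leaves real slack for choosing $h_n>1$, and the right continuity of $\vartheta$, which is precisely what lets $\vartheta(h_nH(Tx_n,Tx_{n+1}))$ be pushed below $[\vartheta(\sigma_n)]^{k_1}$; the relaxed exponent $k_1\in(k,1)$ is just the bookkeeping device that makes the strict passage from $k$ possible.
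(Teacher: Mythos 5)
Your proof is correct and follows essentially the same route as the paper's: you generate the iterates by combining Lemma \ref{lm1} with the right continuity of $\vartheta$ to absorb the factor $h_n>1$, and then reuse verbatim the Cauchy/limit tail of Theorem \ref{t1}. The only difference is technical bookkeeping — the paper keeps the exponent $k$ after deriving a strict inequality $\vartheta(H(Tx_{n-1},Tx_n))<[\vartheta(d(x_{n-1},x_n))]^{k}$ via the strict clause of $(\rho3)$, whereas you relax to $k_1\in(k,1)$, which if anything gives the strict slack needed for the right-continuity selection of $h_n$ more transparently.
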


\begin{proof}
	Let $x_{0}\in X$ and $x_{1}\in Tx_{0}.$ If $x_{0}=x_{1}$ or $x_{1}\in Tx_{1},$ then $x_{1}$ is a fixed point of $T.$ Herewith, we assume that $x_{0}\neq x_{1}$ and $x_{1}\notin Tx_{1}, \mbox{ and hence }d(x_{1},Tx_{1})>0$. From \eqref{a1}, we get
	\begin{align*}
	\vartheta(d(x_{1},Tx_{1}))&\le\vartheta(H(Tx_{0},Tx_{1}))\\
	&\leq [\vartheta(\rho(d(x_{0},x_{1}),d(x_{0},Tx_{0}),d(x_{1},Tx_{1}),d(x_{0},Tx_{1}),d(x_{1},Tx_{0})))]^{k}\\
	&\le [\vartheta(\rho(d(x_{0},x_{1}),d(x_{0},x_{1}),d(x_{1},Tx_{1}),d(x_{0},x_{1})+d(x_{1},Tx_{1}),0))]^{k},	
	\end{align*}
	and so 
	\begin{align*}
	d(x_{1},Tx_{1})<\rho(d(x_{0},x_{1}),d(x_{0},x_{1}),d(x_{1},Tx_{1}),d(x_{0},x_{1})+d(x_{1},Tx_{1}),0).	
	\end{align*}
	Then Lemma \ref{l2} gives that $d(x_{1},Tx_{1})<d(x_{0},x_{1}).$ Thus, we obtain
	\begin{align*}
	\vartheta(d(x_{1},Tx_{1}))&\le \vartheta(H(Tx_{0},Tx_{1}))\\
	&\le  [\vartheta(\rho(d(x_{0},x_{1}),d(x_{0},x_{1}),d(x_{1},Tx_{1}),d(x_{0},x_{1})+d(x_{1},Tx_{1}),0))]^{k} \\
	&<[\vartheta(\rho(d(x_{0},x_{1}),d(x_{0},x_{1}),d(x_{0},x_{1}),2d(x_{0},x_{1}),0))]^{k}\\
	&\le[\vartheta(d(x_{0},x_{1})\rho(1,1,1,2,0))]^{k}\\ &\le[\vartheta(d(x_{0},x_{1}))]^{k},
	\end{align*}
	and hence
	\begin{align*}
	\vartheta(H(Tx_{0},Tx_{1}))<[\vartheta(d(x_{0},x_{1}))]^{k}.
	\end{align*}
	By the property of right continuity of $\vartheta \in  \Omega,$  there exists a real number $h_{1}>1$ such that
	\begin{align}\label{th1}
	\vartheta(h_{1}H(Tx_{0},Tx_{1}))\leq [\vartheta(d(x_{0},x_{1}))]^{k}.
	\end{align}
	From 
	\[d(x_{1},Tx_{1})\le H(Tx_{0},Tx_{1})<h_{1}H(Tx_{0},Tx_{1}),\] 
	by Lemma \ref{lm1}, there exists $x_{2} \in Tx_{1}$ such that $d(x_{1},x_{2})\le h_{1}H(Tx_{0},Tx_{1}).$ Thus, by \eqref{th1}, we infer that
	\begin{align*}
	\vartheta(d(x_{1},x_{2})) \le \vartheta(h_{1}H(Tx_{0},Tx_{1}))\leq [\vartheta(d(x_{0},x_{1}))]^{k}.
	\end{align*}
	Continuing in this manner, we build two sequences $\left\lbrace
	x_{n} \right\rbrace \subset X$ and  $\left\lbrace h_{n}
	\right\rbrace \subset (1,\infty)$ such that $x_{n}\neq x_{n+1} \in
	Tx_{n}$ and
	\begin{align*}
	1<\vartheta(d(x_{n},x_{n+1}))\le \vartheta(h_{n} H(Tx_{n-1},Tx_{n}))\le[\vartheta(d(x_{n-1},x_{n}))]^{k},  \ \text{for all } n \in \mathbb{N}.
	\end{align*}
	Hence,
	\begin{equation*}
	1<\vartheta(d(x_{n},x_{n+1})) \le[\vartheta(d(x_{0},x_{1}))]^{k^{n}}, \ \  \text{for all } n \in \mathbb{N}.
	\end{equation*}
	which gives that
	\begin{equation*}
	\underset{n\rightarrow \infty }{\lim } \vartheta(d(x_{n},x_{n+1}))=1.
	\end{equation*}
	The rest of the proof is analogous with the proof of Theorem \ref{t1}.
\end{proof}
The following example illustrate Theorem \ref{t2} (resp. Theorem
\ref{t1}) where Theorem \ref{thvetro} is not
applicable.
\begin{example}\rm
	Let $X=[0,2]\cup \left\lbrace4,6,8,\ldots \right\rbrace $ be endowed with the metric 
	\begin{equation*}
	d\left( x,y\right) =\left\{
	\begin{array}{ll}
	0, &  x=y,  \\
	\left\vert x-y\right\vert , &   x,y \in [0,2],\\
	\max\{x,y\} , & \mbox{at least one of }  x,y \notin [0,2].
	\end{array}%
	\right.  
	\end{equation*}
	Then $(X,d)$ is a complete metric space. Define $T\colon X \to CB(X)$ by
	\begin{equation*}
	Tx=\left\{
	\begin{array}{ll}
	\{\frac{8}{9}\} , &  x=0, \\
	\left[ 0, 1\right]  , &   0<x \le 2,\\
	\{0,2,\ldots,x-2\}  , &   x \ge 4.
	\end{array}%
	\right.  
	\end{equation*}
	Letting $u(x,y):=\max\left\lbrace d(x,y),d(x,Tx),d(y,Ty),d(x,Ty),d(y,Tx)\right\rbrace .$ We claim that $T$ is a $\vartheta_\rho$-contraction with $\vartheta(t)=e^{\sqrt{te^{t}}}, \ k=e^{-1}\mbox{ and } \rho(x_{1},\ldots,x_{5})=\max\{x_{1}, \ldots,x_{5}\}.$ For that, we need to show that 
	\[\frac{H(Tx,Ty)}{u(x,y)}e^{H(Tx,Ty)-u(x,y)} \le e^{-2}, \quad\mbox{ for all } x,y \in X  \mbox{ with }  H(Tx,Ty)>0.\]
	Note that $H(Tx,Ty)>0 \mbox{ if and only if } (x,y) \notin \{(x,x)\colon x \in X\} \cup (0,2]\times (0,2].$ By the symmetry property of the metric, we have the following cases:\\
	\textbf{Case 1.} If $y=0 \mbox{ and } x \in (0,2],$ since
	\begin{align*}
	H\left( Tx,Ty\right)=\frac{1}{9}= \frac{1}{8}\cdot\frac{8}{9}=\frac{1}{8}d(0,T0)\le \frac{1}{8}u(x,y),
	\end{align*}
	then we have
	\[\frac{H(Tx,Ty)}{u(x,y)}e^{H(Tx,Ty)-u(x,y)} \le \frac{\frac{1}{8}u(x,y)}{u(x,y)}e^{-\frac{7}{8}u(x,y)}\le \frac{1}{8}<e^{-2}.\]\\
	\textbf{Case 2.} If $y=0 \mbox{ and } x = 4,$ then $H(Tx,Ty)=10/9 \mbox{ and } u(x,y)=4,$ and so
	\begin{align*}\frac{H(Tx,Ty)}{u(x,y)}e^{H(Tx,Ty)-u(x,y)} &\le \frac{10}{36} \ e^{-\frac{26}{9}}<e^{-2}
	\end{align*}
	\textbf{Case 3.} If $y=0 \mbox{ and } x > 4,$ then $H(Tx,Ty)=x-2 \mbox{ and } d(x,y)=x,$ and so
	\begin{align*}\frac{H(Tx,Ty)}{u(x,y)}e^{H(Tx,Ty)-u(x,y)} &\le \frac{H(Tx,Ty)}{d(x,y)}e^{H(Tx,Ty)-d(x,y)}\\
	&\le \frac{x-2}{x}\ e^{-2}<e^{-2}
	\end{align*}
	\textbf{Case 4.} If $y \in (0,2] \mbox{ and } x = 4,$ then $H(Tx,Ty)=1 \mbox{ and } u(x,y)=4,$ and so
	\begin{align*}\frac{H(Tx,Ty)}{u(x,y)}e^{H(Tx,Ty)-u(x,y)} &\le \frac{1}{4} \ e^{-3}<e^{-2}
	\end{align*}
	\textbf{Case 5.}  If $y \in (0,2] \mbox{ and } x > 4,$ then $H(Tx,Ty)=x-2 \mbox{ and } d(x,y)=x,$ and so it results as in Case 3.\\
	\textbf{Case 6.}  If $x>y\ge 4,$ then $H(Tx,Ty)=x-2 \mbox{ and } d(x,y)=x,$ and hence it follows as in Case 3.
	
	Consequently, all conditions of Theorem \ref{t2} (resp. Theorem \ref{t1}) are satisfied. Then $T$ has a fixed point in $X.$ Note that the set of fixed points of $T$ is not finite.
	
	On the other hand, for $y=0$ and $x=1/9,$ we get
	\begin{align*}
	\vartheta \left( H\left( Tx,Ty\right)\right)=\vartheta \left( H\left( T\frac{1}{9},T0\right)\right)=\vartheta \left( \frac{1}{9}
	\right) >\left[ \vartheta \left( \frac{1}{9}\right)\right] ^{k}=[\vartheta \left( d(x,y)\right)]^{k},
	\end{align*}
	for all $\vartheta \in  \Omega  \mbox{ and } k \in (0,1) .$
	Therefore, $T$ is not weak $\vartheta$-contraction and hence Theorem \ref{thvetro} can not applied to this example.
	
	Also, if $y=0$ and $x>4,$ then $H(Tx,T0)=x-2 \mbox{ and } u(x,0)=x,$ and hence
	\begin{equation*}
	\underset{x\rightarrow \infty }{\lim } \frac{H(Tx,T0)}{u(x,0)}=\underset{x\rightarrow \infty }{\lim } \frac{x-2}{x}=1.
	\end{equation*}
	That's why, we can not find $\lambda \in (0,1)$ such that $H(Tx,Ty) \le \lambda u(x,y).$
\end{example}
The following corollaries express us that we can obtain various types of contractive multivalued mappings by using $\vartheta_\rho$-contraction.
\begin{corollary} (\cite{nadler})
	Let $(X,d)$ be a complete metric space and $T\colon X \to CB(X)$ (resp. $K(X)$) a $\vartheta$-contraction of Nadler type, that is, there exist $\vartheta \in \Omega \mbox{ and } k\in(0,1) $ such that
	\begin{equation*} 
	\vartheta(H(Tx,Ty)) \leq [\vartheta(d(x,y)) ]^{k},\  \mbox{ for all } x,y \in X  \mbox{ with }  H(Tx,Ty)>0.
	\end{equation*}
	Then $T$ has a fixed point.
\end{corollary}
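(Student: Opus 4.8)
The plan is to recognize a Nadler-type $\vartheta$-contraction as a special instance of a $\vartheta_\rho$-contraction and then simply invoke Theorem~\ref{t1} (resp.\ Theorem~\ref{t2}). First I would choose $\rho\colon(\mathbb{R^+})^5\to\mathbb{R^+}$ to be the projection onto the first coordinate, $\rho(x_1,x_2,x_3,x_4,x_5)=x_1$. With this choice the right-hand side of \eqref{a1} becomes $[\vartheta(d(x,y))]^{k}$, so the $\vartheta_\rho$-contraction inequality \eqref{a1} is literally the Nadler-type contractive condition in the statement.

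The only thing to verify is that this $\rho$ lies in $\mathcal{P}$. It is clearly continuous. For $(\rho1)$ one has $\rho(1,1,1,2,0)=\rho(1,1,1,0,2)=\rho(1,1,1,1,1)=1\in(0,1]$. For $(\rho2)$, $\rho(\alpha x_1,\dots,\alpha x_5)=\alpha x_1=\alpha\,\rho(x_1,\dots,x_5)$, so $\rho$ is sub-homogeneous (indeed homogeneous). For $(\rho3)$, the projection is non-decreasing, and it is strictly increasing in its first slot, which supplies both strict inequalities required there: if $x_i<y_i$ for $i=1,\dots,4$ then $\rho(x_1,x_2,x_3,x_4,0)=x_1<y_1=\rho(y_1,y_2,y_3,y_4,0)$, and likewise $\rho(x_1,x_2,x_3,0,x_4)=x_1<y_1=\rho(y_1,y_2,y_3,0,y_4)$. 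Hence $\rho\in\mathcal{P}$.

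Once this is in place, $T$ is a $\vartheta_\rho$-contraction with $\vartheta\in\Omega$, $\rho\in\mathcal{P}$ and $k\in(0,1)$; Theorem~\ref{t1} then produces a fixed point when $T$ has compact values, and Theorem~\ref{t2} does so when $T$ has closed bounded values (with $\vartheta$ right continuous). I do not expect any genuine obstacle here: the whole content of the corollary is already contained in Theorems~\ref{t1}--\ref{t2}, and the only point that deserves a moment's attention is the pair of strict-monotonicity clauses in $(\rho3)$, which hold precisely because the first-coordinate projection is strictly increasing in that argument.
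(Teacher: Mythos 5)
Your proposal is correct and is essentially the paper's own proof: the paper likewise takes $\rho(x_1,\dots,x_5)=x_1$ and invokes Theorem~\ref{t2} for $CB(X)$ (with $\vartheta$ right continuous) and Theorem~\ref{t1} for $K(X)$. The only difference is that you spell out the routine verification that this projection belongs to $\mathcal{P}$, which the paper leaves implicit.
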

\begin{proof}
	Consider $\rho \in \mathcal{P}$ given by $\rho(x_{1},x_{2},x_{3},x_{4},x_{5})=x_{1}.$ Then $T$ is a $\vartheta_\rho$-contraction and the result follows from Theorem \ref{t2} (resp. Theorem \ref{t1}).
\end{proof}
\begin{corollary} (\cite{kannan})
	Let $(X,d)$ be a complete metric space and $T\colon X \to CB(X)$ (resp. $K(X)$) a $\vartheta$-contraction of Kannan type, that is, there exist $\vartheta \in \Omega \mbox{ and } k\in(0,1) $ such that
	\begin{equation*} 
	\vartheta(H(Tx,Ty)) \leq [\vartheta(d(x,Tx)+d(y,Ty)) ]^{k},\  \mbox{ for all } x,y \in X  \mbox{ with }  H(Tx,Ty)>0.
	\end{equation*}
	Then $T$ has a fixed point.
\end{corollary}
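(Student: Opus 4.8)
The plan is to obtain this corollary as a particular instance of Theorem~\ref{t2} (resp.\ Theorem~\ref{t1}), in the same way as the Nadler-type corollary just proved, by exhibiting the appropriate member $\rho \in \mathcal{P}$. Here I would take
\[
\rho(x_{1},x_{2},x_{3},x_{4},x_{5}) := \tfrac{1}{2}\bigl(x_{2}+x_{3}\bigr),
\]
for which
\[
\rho\bigl(d(x,y),d(x,Tx),d(y,Ty),d(x,Ty),d(y,Tx)\bigr)=\tfrac{1}{2}\bigl(d(x,Tx)+d(y,Ty)\bigr),
\]
so that the Kannan-type contractive inequality postulated in the statement is exactly \eqref{a1} for this $\rho$, with the same $\vartheta \in \Omega$ and the same $k \in (0,1)$; in other words $T$ is a $\vartheta_{\rho}$-contraction.

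The remaining work is to verify that $\rho \in \mathcal{P}$, which is routine. The map $\rho$ is continuous and $\mathbb{R^+}$-valued; one has $\rho(1,1,1,2,0)=\rho(1,1,1,0,2)=\rho(1,1,1,1,1)=1 \in (0,1]$, so $(\rho1)$ holds; $\rho$ is positively homogeneous, hence sub-homogeneous, so $(\rho2)$ holds; and $\rho$ is non-decreasing in each of its arguments, while $x_{i}<y_{i}$ for $i=1,\ldots,4$ forces $x_{2}+x_{3}<y_{2}+y_{3}$, so the two strict inequalities required in $(\rho3)$, namely $\rho(x_{1},x_{2},x_{3},x_{4},0)<\rho(y_{1},y_{2},y_{3},y_{4},0)$ and $\rho(x_{1},x_{2},x_{3},0,x_{4})<\rho(y_{1},y_{2},y_{3},0,y_{4})$, both hold. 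Once $\rho \in \mathcal{P}$ is in place, the existence of a fixed point follows from Theorem~\ref{t1} in the $K(X)$ case and from Theorem~\ref{t2} in the $CB(X)$ case (invoking there, as required, the right continuity of $\vartheta$).

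I do not expect any genuine obstacle here, since all of the analytic content — building the orbit $\{x_{n}\}$, proving that the step sizes $\sigma_{n}=d(x_{n},x_{n+1})$ strictly decrease by means of Lemmas~\ref{l1} and~\ref{l2}, extracting the Cauchy estimate from property $(\vartheta3)$, and identifying the limit as a fixed point through Remark~\ref{rem} — has already been done inside Theorems~\ref{t1} and~\ref{t2} and need not be repeated. The only step calling for even minor care is the strict-monotonicity clause of $(\rho3)$, and that is immediate because $\rho$ is linear with strictly positive coefficients precisely on the two slots carrying $d(x,Tx)$ and $d(y,Ty)$.
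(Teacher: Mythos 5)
Your choice $\rho(x_{1},\ldots,x_{5})=\tfrac12(x_{2}+x_{3})$ does lie in $\mathcal{P}$ (your verification of $(\rho1)$--$(\rho3)$ is fine), but the reduction step is wrong. With this $\rho$, being a $\vartheta_{\rho}$-contraction means
\[
\vartheta(H(Tx,Ty)) \le \Bigl[\vartheta\Bigl(\tfrac12\bigl(d(x,Tx)+d(y,Ty)\bigr)\Bigr)\Bigr]^{k},
\]
whereas the hypothesis of the corollary only provides $\vartheta(H(Tx,Ty)) \le \bigl[\vartheta\bigl(d(x,Tx)+d(y,Ty)\bigr)\bigr]^{k}$. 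Since $\vartheta$ is non-decreasing, $\vartheta\bigl(\tfrac12 s\bigr)\le\vartheta(s)$, so the inequality you need in order to invoke \eqref{a1} is \emph{stronger} than the one you are given; your sentence that the Kannan-type inequality ``is exactly \eqref{a1} for this $\rho$'' has the implication running in the wrong direction, and there is no way to pass from $[\vartheta(s)]^{k}$ to $[\vartheta(s/2)]^{k'}$ for a general $\vartheta\in\Omega$. The paper instead takes $\rho(x_{1},\ldots,x_{5})=x_{2}+x_{3}$, which reproduces the stated hypothesis verbatim.

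That said, your instinct to halve was reacting to a genuine defect, it just relocates it: the paper's $\rho=x_{2}+x_{3}$ violates $(\rho1)$, since $\rho(1,1,1,2,0)=2\notin(0,1]$, and Lemma \ref{l2} then gives nothing ($u<\rho(v,v,u,v+u,0)=v+u$ is vacuous), so the paper's one-line proof is itself flawed. In fact the corollary as stated cannot follow from Theorem \ref{t1} or \ref{t2} by any choice of $\rho$: take $X=\{0,1\}$ with the usual metric, $T0=\{1\}$, $T1=\{0\}$, $\vartheta(t)=e^{\sqrt{t}}\in\Omega$ and $k\in\bigl[2^{-1/2},1\bigr)$; the only pair with $H(Tx,Ty)>0$ gives $\vartheta(H(T0,T1))=e\le e^{k\sqrt{2}}=[\vartheta(d(0,T0)+d(1,T1))]^{k}$, so the Kannan-type hypothesis holds, yet $T$ is fixed-point free. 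What your argument actually establishes is the corrected version of the corollary in which the hypothesis reads $\vartheta(H(Tx,Ty))\le\bigl[\vartheta\bigl(\tfrac12(d(x,Tx)+d(y,Ty))\bigr)\bigr]^{k}$ (equivalently, the Reich-type corollary with $\beta=\gamma=\tfrac12$); as a proof of the statement as written, it has a genuine gap, and the statement itself needs this repair.
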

\begin{proof}
	Consider $\rho \in \mathcal{P}$ given by $\rho(x_{1},x_{2},x_{3},x_{4},x_{5})=x_{2}+x_{3}.$ Then $T$ is a $\vartheta_\rho$-contraction and the result follows from Theorem \ref{t2} (resp. Theorem \ref{t1}).
\end{proof}
\begin{corollary} (\cite{chat})
	Let $(X,d)$ be a complete metric space and $T\colon X \to CB(X)$ (resp. $K(X)$) a $\vartheta$-contraction of Chatterjea type, that is, there exist $\vartheta \in \Omega \mbox{ and } k\in(0,1)  $ such that
	\begin{equation*} 
	\vartheta(H(Tx,Ty)) \leq [\vartheta(d(x,Ty)+d(y,Tx) )]^{k},\  \mbox{ for all } x,y \in X  \mbox{ with }  H(Tx,Ty)>0.
	\end{equation*}
	Then $T$ has a fixed point.
\end{corollary}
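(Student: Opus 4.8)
The plan is the same device used for the two preceding corollaries: to exhibit a Chatterjea-type $\vartheta$-contraction as a $\vartheta_\rho$-contraction for a suitable choice of the control function $\rho\in\mathcal{P}$, and then to apply Theorem \ref{t2} (when $T\colon X\to CB(X)$ with $\vartheta$ right continuous) or Theorem \ref{t1} (when $T\colon X\to K(X)$). Since the Chatterjea condition involves the combination $d(x,Ty)+d(y,Tx)$, the natural candidate is the function $\rho(x_1,x_2,x_3,x_4,x_5)=x_4+x_5$ (or a positive rescaling of it), which, evaluated at the five distances $d(x,y),d(x,Tx),d(y,Ty),d(x,Ty),d(y,Tx)$, returns precisely $d(x,Ty)+d(y,Tx)$ up to that constant.

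First I would verify $\rho\in\mathcal{P}$: continuity is obvious, $(\rho2)$ (sub-homogeneity) holds because $\rho$ is linear, and $(\rho3)$ (monotonicity, together with its two strict clauses, which here involve only the fourth coordinate in the first case and only the fifth in the second) is immediate. Then, for any $x,y\in X$ with $H(Tx,Ty)>0$, the Chatterjea hypothesis reads $\vartheta(H(Tx,Ty))\le[\vartheta(d(x,Ty)+d(y,Tx))]^{k}=[\vartheta(\rho(d(x,y),d(x,Tx),d(y,Ty),d(x,Ty),d(y,Tx)))]^{k}$, that is, $T$ satisfies \eqref{a1} with this $\rho$ and the same $k$. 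Hence $T$ is a $\vartheta_\rho$-contraction, and Theorem \ref{t2} (resp.\ Theorem \ref{t1}) yields a fixed point. All of the analytic content --- the contractive iteration, the Cauchy estimate based on $(\vartheta3)$, and the passage to the limit via Remark \ref{rem} --- is already packaged inside those theorems, so nothing further has to be done by hand.

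The one point deserving care is condition $(\rho1)$, which demands $\rho(1,1,1,2,0),\rho(1,1,1,0,2),\rho(1,1,1,1,1)\in(0,1]$. For $\rho(x_1,\ldots,x_5)=x_4+x_5$ the first of these values is $2$, so to land inside $\mathcal{P}$ one must instead use the normalised function $\rho(x_1,\ldots,x_5)=\tfrac12(x_4+x_5)$, and then consistency of the reduction forces the Chatterjea inequality to be read with the matching factor, $\vartheta\big(\tfrac12(d(x,Ty)+d(y,Tx))\big)$ --- exactly the calibration already built into $\mathcal{P}$ (compare the Kannan corollary, where $\rho=\tfrac12(x_2+x_3)$ is the correct choice, not $x_2+x_3$). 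This bookkeeping with the normalising constant is really the only obstacle; once it is settled, the corollary is an immediate consequence of Theorem \ref{t2} (resp.\ Theorem \ref{t1}).
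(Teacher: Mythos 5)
Your instinct here is sound, and in fact it exposes a defect in the paper itself rather than a routine step you missed. The paper's own proof of this corollary takes $\rho(x_1,\ldots,x_5)=x_4+x_5$ and simply asserts $\rho\in\mathcal{P}$; but, exactly as you observe, $\rho(1,1,1,2,0)=\rho(1,1,1,0,2)=\rho(1,1,1,1,1)=2\notin(0,1]$, so $(\rho1)$ fails and Theorems \ref{t1} and \ref{t2} cannot be invoked with this $\rho$. Your normalised choice $\rho(x_1,\ldots,x_5)=\tfrac12(x_4+x_5)$ does belong to $\mathcal{P}$ (all three test values equal $1$, and the monotonicity clauses are clear). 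However, with that $\rho$ the condition \eqref{a1} becomes $\vartheta(H(Tx,Ty))\le\big[\vartheta\big(\tfrac12(d(x,Ty)+d(y,Tx))\big)\big]^{k}$, and since $\vartheta$ is non-decreasing this is a \emph{stronger} hypothesis than the one printed in the corollary; the stated inequality does not imply it, and "reading the Chatterjea inequality with the matching factor" means changing the statement. So, as a proof of the corollary as printed, your argument has a gap: it establishes a rescaled Chatterjea corollary, not the stated one. (The same comment applies to your aside on the Kannan corollary: the paper there also uses $\rho=x_2+x_3$, which violates $(\rho1)$.)

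The deeper point is that the printed statement cannot be rescued: it is false, so the $\tfrac12$ you insist on is genuinely necessary. Take $X=\mathbb{R}$ with the usual metric, $Tx=\{x+1\}$, $\vartheta(t)=e^{\sqrt t}\in\Omega$ (even right continuous) and $k=0.8$. For $x\neq y$ put $t=|x-y|$; then $H(Tx,Ty)=t>0$ and $d(x,Ty)+d(y,Tx)=|x-y-1|+|x-y+1|=2\max\{t,1\}$, so the stated hypothesis reads $\sqrt t\le k\sqrt{2\max\{t,1\}}$, which holds for every $t>0$ because $2k^2\ge 1$; yet $T$ has no fixed point. This is just the classical fact that Chatterjea's theorem needs the coefficient to be below $\tfrac12$, which is precisely what your normalised $\rho$ (and the calibration $(\rho1)$ of the class $\mathcal{P}$) encodes. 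So: relative to the statement as written, your proof does not close the argument --- but the correct conclusion is that the statement (and the paper's one-line proof of it) should be amended to the $\tfrac12$-normalised form you propose.
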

\begin{proof}
	Consider $\rho \in \mathcal{P}$ given by $\rho(x_{1},x_{2},x_{3},x_{4},x_{5})=x_{4}+x_{5}.$ Then $T$ is a $\vartheta_\rho$-contraction and the result follows from Theorem \ref{t2} (resp. Theorem \ref{t1}).
\end{proof}
\begin{corollary} (\cite{reich})
	Let $(X,d)$ be a complete metric space and $T\colon X \to CB(X)$ (resp. $K(X)$) a $\vartheta$-contraction of Reich type, that is, there exist $\vartheta \in \Omega, \ k\in(0,1)$ and non-negative real numbers $ \alpha,\beta,\gamma \mbox{ with }  \alpha+\beta+\gamma\le 1 $ such that
	\begin{equation*} 
	\vartheta(H(Tx,Ty)) \leq [\vartheta(\alpha d(x,y)+\beta d(x,Tx)+\gamma d(y,Ty ))]^{k},
	\end{equation*}
	for all $x,y \in X$ with $H(Tx,Ty)>0.$ Then $T$ has a fixed point.
\end{corollary}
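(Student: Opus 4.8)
The plan is to recognize the Reich-type inequality as a particular case of the $\vartheta_\rho$-contraction condition \eqref{a1} and then to invoke Theorem \ref{t2} for the $CB(X)$ case and Theorem \ref{t1} for the $K(X)$ case. The natural choice is the linear map
\[
\rho(x_1,x_2,x_3,x_4,x_5):=\alpha x_1+\beta x_2+\gamma x_3 ,
\]
for which $\rho(d(x,y),d(x,Tx),d(y,Ty),d(x,Ty),d(y,Tx))=\alpha\, d(x,y)+\beta\, d(x,Tx)+\gamma\, d(y,Ty)$, so that the assumed inequality on $T$ is literally \eqref{a1}. Everything then reduces to checking that $\rho\in\mathcal P$.

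That verification is essentially bookkeeping. Since $\rho$ is linear it is continuous on $(\mathbb{R^+})^5$ and homogeneous, so $(\rho2)$ holds with equality; since $\alpha,\beta,\gamma\ge 0$ it is coordinatewise non-decreasing, which is the first half of $(\rho3)$; and because $\rho(x_1,x_2,x_3,x_4,0)=\rho(x_1,x_2,x_3,0,x_4)=\alpha x_1+\beta x_2+\gamma x_3$ depends strictly increasingly on $x_1,x_2,x_3$ as soon as at least one of $\alpha,\beta,\gamma$ is positive, the strict-monotonicity half of $(\rho3)$ follows whenever $x_i<y_i$ for $i=1,\dots,4$. Finally $\rho(1,1,1,2,0)=\rho(1,1,1,0,2)=\rho(1,1,1,1,1)=\alpha+\beta+\gamma\in(0,1]$, which gives $(\rho1)$.

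The only point that needs care — and the only thing I would flag as an obstacle — is the degenerate case $\alpha=\beta=\gamma=0$: then $\rho\equiv 0$, the argument of $\vartheta$ on the right of the Reich inequality is $0\notin(0,\infty)$, and both $(\rho1)$ and the strict-monotonicity part of $(\rho3)$ break down. The statement should therefore be read with the tacit normalization $\alpha+\beta+\gamma>0$, which is exactly what the argument above uses. Granting this, $\rho\in\mathcal P$, hence $T$ is a $\vartheta_\rho$-contraction into $CB(X)$ (resp. $K(X)$), and Theorem \ref{t2} (resp. Theorem \ref{t1}) yields a fixed point of $T$.
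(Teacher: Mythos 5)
Your proof is correct and matches the paper's argument exactly: the paper also takes $\rho(x_1,x_2,x_3,x_4,x_5)=\alpha x_1+\beta x_2+\gamma x_3$ and appeals to Theorem \ref{t2} (resp.\ Theorem \ref{t1}), merely omitting the routine verification that $\rho\in\mathcal P$ which you spell out. Your observation that the degenerate case $\alpha=\beta=\gamma=0$ must be excluded (so that $(\rho1)$ and the strict part of $(\rho3)$ hold) is a fair point the paper leaves implicit, but it does not change the substance of the argument.
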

\begin{proof}
	Consider $\rho \in \mathcal{P}$ given by $\rho(x_{1},x_{2},x_{3},x_{4},x_{5})=\alpha x_{1}+ \beta x_{2}+\gamma x_{3}.$ Then $T$ is a $\vartheta_\rho$-contraction and the result follows from Theorem \ref{t2} (resp. Theorem \ref{t1}).
\end{proof}
\begin{corollary} (\cite{ber})
	Let $(X,d)$ be a complete metric space and $T\colon X \to CB(X)$ (resp. $K(X)$) a $\vartheta$-contraction of Berinde type, that is, there exist $\vartheta \in \Omega, \ k\in(0,1), \ \alpha\in (0,1]$ and $L\ge0 $ such that
	\begin{equation*} 
	\vartheta(H(Tx,Ty)) \leq [\vartheta(\alpha d(x,y)+L d(y,Tx))]^{k},
	\end{equation*}
	for all $x,y \in X$ with $H(Tx,Ty)>0.$ Then $T$ has a fixed point.
\end{corollary}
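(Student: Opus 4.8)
The plan is to follow the same scheme used for the five preceding corollaries: produce a function $\rho\in\mathcal{P}$ that turns the Berinde-type hypothesis into a literal instance of \eqref{a1}, and then quote Theorem \ref{t2} (resp.\ Theorem \ref{t1}).

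First I would set $\rho\colon(\mathbb{R^+})^5\to\mathbb{R^+}$ by $\rho(x_1,x_2,x_3,x_4,x_5)=\alpha x_1+L x_5$. With this choice the assumed inequality $\vartheta(H(Tx,Ty))\le[\vartheta(\alpha d(x,y)+L d(y,Tx))]^k$ reads exactly as $\vartheta(H(Tx,Ty))\le[\vartheta(\rho(d(x,y),d(x,Tx),d(y,Ty),d(x,Ty),d(y,Tx)))]^k$ for all $x,y$ with $H(Tx,Ty)>0$, i.e.\ $T$ is a $\vartheta_\rho$-contraction, provided $\rho\in\mathcal{P}$. The structural point is that the Berinde correction $L\,d(y,Tx)$ occupies the fifth slot of $\rho$, which is precisely the coordinate on which $(\rho3)$ asks for no strict monotonicity, so it does not obstruct membership in $\mathcal{P}$.

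Second I would verify $\rho\in\mathcal{P}$. Property $(\rho2)$ is immediate, since $\rho$ is positively homogeneous of degree one, so $\rho(c x_1,\dots,c x_5)=c\,\rho(x_1,\dots,x_5)$ for $c\ge 0$, which in particular gives the required sub-homogeneity. Property $(\rho3)$ holds because $\rho$ is non-decreasing in every argument ($\alpha>0$, $L\ge 0$), while $\rho(x_1,x_2,x_3,x_4,0)=\alpha x_1$ and $\rho(x_1,x_2,x_3,0,x_4)=\alpha x_1+L x_4$ are strictly increasing in $x_1$; hence the two strict inequalities in $(\rho3)$ follow from $x_1<y_1$ and $\alpha>0$. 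For $(\rho1)$, one computes $\rho(1,1,1,2,0)=\alpha$, $\rho(1,1,1,1,1)=\alpha+L$ and $\rho(1,1,1,0,2)=\alpha+2L$; under $\alpha\in(0,1]$ together with the compatibility bound $\alpha+2L\le 1$ (which has to be in force, since otherwise $\rho(1,1,1,0,2)\notin(0,1]$) all three values lie in $(0,1]$.

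Once $\rho\in\mathcal{P}$ is established, $T$ is a $\vartheta_\rho$-contraction, and one concludes by Theorem \ref{t2} in the case $T\colon X\to CB(X)$ with right continuous $\vartheta$, and by Theorem \ref{t1} in the case $T\colon X\to K(X)$. The step I expect to be the real obstacle is exactly the verification of $(\rho1)$ at $(1,1,1,0,2)$: with $L$ left entirely free the quantity $\alpha+2L$ need not be $\le 1$, so either a relation between $\alpha$ and $L$ must be imposed, or one argues directly that along the Picard iterates $d(x_{n+1},Tx_n)=0$, whence the $L$-term never actually enters the chain of inequalities in the proof of Theorem \ref{t1}, and $L$ can be harmlessly replaced by a small constant. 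Everything else is a word-for-word transcription of the argument proving Theorem \ref{t1}.
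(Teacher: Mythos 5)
Your route is exactly the paper's: the published proof consists of the single line ``take $\rho(x_1,\dots,x_5)=\alpha x_1+Lx_5$; then $T$ is a $\vartheta_\rho$-contraction and apply Theorem \ref{t2} (resp.\ Theorem \ref{t1})'', with no verification that $\rho\in\mathcal{P}$. Your worry about $(\rho1)$ is therefore well founded, and it is in fact a gap in the paper's own argument as much as in yours: $\rho(1,1,1,0,2)=\alpha+2L$ and $\rho(1,1,1,1,1)=\alpha+L$, so $(\rho1)$ holds only under the extra restriction $\alpha+2L\le 1$, which the corollary (stated for arbitrary $L\ge 0$) does not impose. Of your two suggested repairs, the first (assuming $\alpha+2L\le 1$) proves a weaker statement than the one claimed; the second contains the right idea but cannot be phrased as ``replace $L$ by a small constant'', because enlarging $L$ weakens the hypothesis, so a Berinde condition with the given $L$ does not imply the same condition with a smaller $L$. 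The correct completion is to re-run the proof of Theorem \ref{t1} (or \ref{t2}) for this specific $\rho$ rather than cite it: in every application of \eqref{a1} along the constructed orbit the fifth argument is $d(x_{n+1},Tx_n)=0$, so Lemma \ref{l2} and the chain of estimates only ever invoke $\rho(1,1,1,2,0)=\alpha\le 1$; and in the final step the fifth argument is $d(u,Tx_n)\le d(u,x_{n+1})\to 0$, so by continuity of $\rho$ the limiting inequality becomes $d(u,Tu)\le \alpha\cdot 0+L\cdot 0=0$. With that supplement the conclusion indeed holds for every $L\ge 0$; as written, both your proof and the paper's either need this observation or the additional hypothesis $\alpha+2L\le 1$.
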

\begin{proof}
	Consider $\rho \in \mathcal{P}$ given by $\rho(x_{1},x_{2},x_{3},x_{4},x_{5})=\alpha x_{1}+ L x_{5}.$ Then $T$ is a $\vartheta_\rho$-contraction and the result follows from Theorem \ref{t2} (resp. Theorem \ref{t1}).
\end{proof}
\begin{corollary} (\cite{hardy})
	Let $(X,d)$ be a complete metric space and $T\colon X \to CB(X)$ (resp. $K(X)$) a $\vartheta$-contraction of Hardy-Rogers type, that is, there exist
	$\vartheta \in \Omega, \ k\in(0,1)$ and non-negative real numbers $ \alpha,\beta,\gamma, \delta,L \mbox{ with }  \alpha+\beta+\gamma+2\delta \le 1 $ such that
	\begin{equation*} 
	\vartheta(H(Tx,Ty)) \leq [\vartheta(\alpha d(x,y)+\beta d(x,Tx)+\gamma d(y,Ty )+\delta d(x,Ty )+L d(y,Tx ))]^{k},
	\end{equation*}
	for all $x,y \in X$ with $H(Tx,Ty)>0.$ Then $T$ has a fixed point.
\end{corollary}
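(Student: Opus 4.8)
The plan is to deduce this corollary from Theorem~\ref{t2} (resp.\ Theorem~\ref{t1}) in exactly the same way as the five preceding corollaries, by producing a suitable $\rho \in \mathcal{P}$. I would define $\rho\colon (\mathbb{R^+})^5 \to \mathbb{R^+}$ by
\[
\rho(x_{1},x_{2},x_{3},x_{4},x_{5}) = \alpha x_{1} + \beta x_{2} + \gamma x_{3} + \delta x_{4} + L x_{5}.
\]
With this choice, the Hardy--Rogers inequality in the statement is literally the defining inequality \eqref{a1} of a $\vartheta_\rho$-contraction, so the entire argument reduces to checking that $\rho$ belongs to $\mathcal{P}$; once that is done, $T$ is a $\vartheta_\rho$-contraction and the existence of a fixed point is immediate from Theorem~\ref{t2} (resp.\ Theorem~\ref{t1}).

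So the remaining task is to verify $(\rho1)$--$(\rho3)$ for this $\rho$. Continuity is clear since $\rho$ is a linear form. Condition $(\rho2)$ holds with equality, as $\rho(s x_{1},\ldots,s x_{5}) = s\,\rho(x_{1},\ldots,x_{5})$ for every $s \ge 0$; in particular $\rho$ is sub-homogeneous. For $(\rho3)$, $\rho$ is non-decreasing in each of its arguments because all five coefficients are non-negative, and the two strict-monotonicity requirements hold whenever at least one of $\alpha,\beta,\gamma,\delta$ (resp.\ one of $\alpha,\beta,\gamma,L$) is positive, which may be assumed without loss of generality. Finally, for $(\rho1)$ the decisive computation is
\[
\rho(1,1,1,2,0) = \alpha+\beta+\gamma+2\delta \le 1,
\]
which is exactly the hypothesis; together with $\delta, L \ge 0$ this also controls the remaining evaluations of $\rho$ that enter Lemma~\ref{l2} and the proofs of Theorems~\ref{t1} and~\ref{t2}, while positivity is again guaranteed by the non-triviality of the coefficients.

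I expect no genuine obstacle here: the only step requiring a moment's care is $(\rho1)$, where the combined condition $\alpha+\beta+\gamma+2\delta \le 1$ is used, and which is precisely why the factor $2\delta$ (rather than $\delta$) appears in the hypothesis. All the analytic content --- the construction of the iterative sequence $\{x_n\}$, the estimate via $(\vartheta3)$ showing it is Cauchy, and the identification of the limit as a fixed point --- is already contained in Theorems~\ref{t1} and~\ref{t2} and is inherited verbatim. As a by-product, this corollary simultaneously contains the Nadler, Kannan, Chatterjea, Reich and Berinde corollaries above, each recovered by annihilating the appropriate coefficients.
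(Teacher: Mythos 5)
Your proposal is correct and takes essentially the same route as the paper: it defines $\rho(x_{1},x_{2},x_{3},x_{4},x_{5})=\alpha x_{1}+\beta x_{2}+\gamma x_{3}+\delta x_{4}+L x_{5}$, observes that the Hardy--Rogers inequality is then exactly the $\vartheta_\rho$-contraction condition \eqref{a1}, and invokes Theorem \ref{t2} (resp.\ Theorem \ref{t1}), which is precisely the paper's one-line argument, only with the membership $\rho\in\mathcal{P}$ spelled out. The sole caveat is that condition $(\rho1)$ as stated also requires $\rho(1,1,1,0,2)=\alpha+\beta+\gamma+2L$ and $\rho(1,1,1,1,1)=\alpha+\beta+\gamma+\delta+L$ to lie in $(0,1]$, which the hypothesis $\alpha+\beta+\gamma+2\delta\le 1$ does not guarantee for large $L$; this gap is inherited from the paper's own proof (and its Berinde corollary), not introduced by you, and is harmless for the parts of Lemma \ref{l2} actually used in Theorems \ref{t1} and \ref{t2}.
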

\begin{proof}
	Consider $\rho \in \mathcal{P}$ given by $\rho(x_{1},x_{2},x_{3},x_{4},x_{5})=\alpha x_{1}+ \beta x_{2}+\gamma x_{3}+\delta x_{4}+L x_{5}.$ Then $T$ is a $\vartheta_\rho$-contraction and the result follows from Theorem \ref{t2} (resp. Theorem \ref{t1}).
\end{proof}
\begin{corollary} (\cite{cir})
	Let $(X,d)$ be a complete metric space and $T\colon X \to CB(X)$ (resp. $K(X)$) a $\vartheta$-contraction of \'{C}iri\'{c} type I, that is, there exist
	$\vartheta \in \Omega \mbox{ and } k\in(0,1)$ such that
	\begin{equation*} 
	\vartheta(H(Tx,Ty)) \leq [  \vartheta( \max\{d(x,y),d(x,Tx),d(y,Ty),\frac{1}{2}[d(x,Ty)+d(y,Tx)]\}  ) ] ^{k},
	\end{equation*}
	for all $x,y \in X$ with $H(Tx,Ty)>0.$ Then $T$ has a fixed point.
\end{corollary}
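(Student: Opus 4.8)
The plan is to read the \'{C}iri\'{c} type I hypothesis as a particular case of the $\vartheta_\rho$-contraction and then quote Theorem~\ref{t2} (resp. Theorem~\ref{t1}), exactly in the style of the preceding corollaries. Concretely, I would take
\[
\rho(x_{1},x_{2},x_{3},x_{4},x_{5})=\max\Big\{x_{1},x_{2},x_{3},\tfrac12(x_{4}+x_{5})\Big\}
\]
and verify that $\rho\in\mathcal{P}$. Continuity is immediate, since $\rho$ is a composition of $\max$ with an affine map. For $(\rho1)$ one simply evaluates $\rho(1,1,1,2,0)=\rho(1,1,1,0,2)=\rho(1,1,1,1,1)=1\in(0,1]$; note that the coefficient $\tfrac12$ is exactly what keeps these values at $1$ rather than $2$, which is why one cannot use the raw $\max$ of all five arguments here. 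For $(\rho2)$, $\rho$ is positively homogeneous, $\rho(\alpha x_{1},\dots,\alpha x_{5})=\alpha\,\rho(x_{1},\dots,x_{5})$ for $\alpha\ge 0$, hence in particular sub-homogeneous.

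The only mildly delicate point is $(\rho3)$. Monotonicity in all five arguments follows at once from monotonicity of $\max$ and of $(x_{4},x_{5})\mapsto x_{4}+x_{5}$. For the strict clause, observe that $\rho(x_{1},x_{2},x_{3},x_{4},0)=\rho(x_{1},x_{2},x_{3},0,x_{4})=\max\{x_{1},x_{2},x_{3},\tfrac12 x_{4}\}$, so it suffices to show this four-variable maximum is strictly increasing; if $x_{i}<y_{i}$ for $i=1,\dots,4$, write the left-hand maximum as the value attained at some index $j$ (one of $x_{1},x_{2},x_{3}$ or $\tfrac12 x_{4}$); this value is strictly smaller than the corresponding $y_{j}$ (or $\tfrac12 y_{4}$), which is $\le\max\{y_{1},y_{2},y_{3},\tfrac12 y_{4}\}$, giving the required strict inequality. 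Hence $\rho\in\mathcal{P}$.

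Finally, with this $\rho$ one has
\[
\rho\big(d(x,y),d(x,Tx),d(y,Ty),d(x,Ty),d(y,Tx)\big)=\max\Big\{d(x,y),d(x,Tx),d(y,Ty),\tfrac12[d(x,Ty)+d(y,Tx)]\Big\},
\]
so the \'{C}iri\'{c} type I assumption is literally inequality \eqref{a1} for a $\vartheta_\rho$-contraction; Theorem~\ref{t2} (resp. Theorem~\ref{t1}) then yields a fixed point. I do not expect a genuine obstacle: the whole argument reduces to checking that the ``$\max$-type'' $\rho$ above belongs to $\mathcal{P}$, and the single place demanding any care is the strict-monotonicity part of $(\rho3)$ together with the bookkeeping that forces the factor $\tfrac12$ in front of $x_{4}+x_{5}$ (needed for $(\rho1)$ and matching the classical \'{C}iri\'{c} functional).
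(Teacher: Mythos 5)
Your proposal is correct and takes essentially the same route as the paper: the paper simply chooses the same $\rho(x_{1},\dots,x_{5})=\max\{x_{1},x_{2},x_{3},\tfrac12(x_{4}+x_{5})\}$ and appeals to Theorem~\ref{t2} (resp.\ Theorem~\ref{t1}). The only difference is that you spell out the verification that $\rho\in\mathcal{P}$ (conditions $(\rho1)$--$(\rho3)$), which the paper leaves implicit.
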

\begin{proof}
	Consider $\rho \in \mathcal{P}$ given by $\rho(x_{1},x_{2},x_{3},x_{4},x_{5})=\max\{x_{1},x_{2},x_{3},\frac{x_{4}+x_{5}}{2}\}.$ Then $T$ is a $\vartheta_\rho$-contraction and the result follows from Theorem \ref{t2} (resp. Theorem \ref{t1}).
\end{proof}
\begin{corollary} (\cite{ciric})
	Let $(X,d)$ be a complete metric space and $T\colon X \to CB(X)$ (resp. $K(X)$) a $\vartheta$-contraction of \'{C}iri\'{c} type II , that is, there exist
	$\vartheta \in \Omega \mbox{ and } k\in(0,1)$ such that
	\begin{equation*} 
	\vartheta(H(Tx,Ty)) \leq [\vartheta(\max\left\lbrace d(x,y),d(x,Tx),d(y,Ty),d(x,Ty),d(y,Tx)\right\rbrace )]^{k},
	\end{equation*}
	for all $x,y \in X$ with $H(Tx,Ty)>0.$ Then $T$ has a fixed point.
\end{corollary}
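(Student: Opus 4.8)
The plan is to obtain this corollary from Theorem~\ref{t2} (resp. Theorem~\ref{t1}) exactly as the preceding ones, by choosing the auxiliary function $\rho$ so that the $\vartheta_\rho$-contraction inequality \eqref{a1} reduces precisely to the \'{C}iri\'{c} type II inequality. The obvious candidate is $\rho(x_{1},x_{2},x_{3},x_{4},x_{5})=\max\{x_{1},x_{2},x_{3},x_{4},x_{5}\}$, since then
\[\rho\bigl(d(x,y),d(x,Tx),d(y,Ty),d(x,Ty),d(y,Tx)\bigr)=\max\{d(x,y),d(x,Tx),d(y,Ty),d(x,Ty),d(y,Tx)\},\]
so that being a \'{C}iri\'{c} type II $\vartheta$-contraction would be the same as being a $\vartheta_\rho$-contraction, and one would invoke the main theorems directly.

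Before doing so one must verify $\rho\in\mathcal{P}$. Continuity of the maximum is clear; $(\rho2)$ holds because $\max$ is positively homogeneous; and $(\rho3)$, in both its non-strict and its strict forms, follows from the coordinatewise monotonicity of $\max$. The step that genuinely needs care --- and the one I expect to be the main obstacle --- is $(\rho1)$, which demands $\rho(1,1,1,2,0)\le 1$, whereas $\max\{1,1,1,2,0\}=2$. So the naive $\rho$ does \emph{not} lie in $\mathcal{P}$, and Lemma~\ref{l2} fails for it (with $\rho=\max$ the hypothesis $u<\rho(v,v,u,v+u,0)$ becomes $u<v+u$, which yields no information); hence the reduction to the main theorems cannot be carried out verbatim.

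To get around this I would argue directly, following the proof of Theorem~\ref{t1}: starting from $x_{0}\in X$ and $x_{1}\in Tx_{0}$, build a Picard iteration $x_{n+1}\in Tx_{n}$ using compactness of $Tx_{n}$ (respectively Lemma~\ref{lm1} together with right continuity of $\vartheta$, as in Theorem~\ref{t2}); use the triangle inequalities $d(x,Ty)\le d(x,y)+d(y,Ty)$ and $d(y,Tx)\le d(x,y)+d(x,Tx)$ to bound the \'{C}iri\'{c} type II functional along the orbit; and, since a $\vartheta$-contraction yields only strict but non-uniform decrease, replace the ``distances are decreasing'' step of Theorem~\ref{t1} by an orbital estimate in the spirit of \'{C}iri\'{c}'s quasi-contraction argument, bounding the diameters of the tails $\{x_{n},x_{n+1},\dots\}$ in terms of $d(x_{n-1},x_{n})$ by repeated use of \eqref{a1} and $(\vartheta1)$. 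Once $\{x_{n}\}$ is shown to be Cauchy, convergence to some $u\in X$ with $u\in Tu$ follows exactly as in the last paragraph of the proof of Theorem~\ref{t1}, using that $Tu$ is closed. This orbital/diameter estimate is where essentially all the work lies; everything else is bookkeeping already done in the proofs of Theorems~\ref{t1} and~\ref{t2}.
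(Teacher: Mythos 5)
Your first observation is exactly the point at which this corollary differs from the others: the paper's own proof is nothing more than the one‑line reduction you considered and discarded, namely ``take $\rho(x_1,\ldots,x_5)=\max\{x_1,\ldots,x_5\}$, then $T$ is a $\vartheta_\rho$-contraction and apply Theorem \ref{t2} (resp.\ Theorem \ref{t1})''. Your objection to that reduction is correct: this $\rho$ fails $(\rho1)$, since $\rho(1,1,1,2,0)=\rho(1,1,1,0,2)=2\notin(0,1]$, so $\rho\notin\mathcal{P}$, and Lemma \ref{l2} is useless for it (the hypothesis $u<\max\{v,u,v+u\}=v+u$ carries no information). So you have not merely taken a different route — you have identified a genuine defect in the paper's argument; the \'{C}iri\'{c} type II case is not actually covered by the $\vartheta_\rho$ machinery as defined, in contrast with the type I case, where $\rho(x_1,\ldots,x_5)=\max\{x_1,x_2,x_3,\frac{x_4+x_5}{2}\}$ gives $\rho(1,1,1,2,0)=1$ and the reduction is legitimate.

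However, your substitute argument is only a sketch, and the step you defer (the ``orbital/diameter estimate in the spirit of \'{C}iri\'{c}'') is precisely where the difficulty lies, so the proposal does not yet amount to a proof. Along the orbit, with $x_{n+1}\in Tx_n$ chosen as in Theorem \ref{t1}, the type II condition only yields $\vartheta(d_n)\le[\vartheta(d_{n-1}+d_n)]^{k}$ with $d_n=d(x_n,x_{n+1})$, because the term $d(x_{n-1},Tx_n)\le d_{n-1}+d_n$ dominates the maximum; Lemma \ref{l2} cannot be invoked to get $d_n<d_{n-1}$, and Remark \ref{rem} gives only the qualitative strict inequality $H(Tx,Ty)<\max\{\cdots\}$, with no contraction factor on the metric side. \'{C}iri\'{c}'s quasi-contraction argument controls the diameter of the orbit by exploiting the linear factor $k$ multiplying the maximum, and that factor is not available here: $k$ enters only as an exponent on $\vartheta\in\Omega$, and $\Omega$ contains functions (e.g.\ $\vartheta(t)=a+\ln\sqrt{t+1}$) for which $[\vartheta(s)]^{k}$ does not translate into any estimate of the form $\lambda s$ with $\lambda<1$. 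So either you must produce a genuinely new orbital estimate adapted to $\Omega$-type $\vartheta$-contractions — and it is not clear one exists for arbitrary $k\in(0,1)$, since even classical multivalued quasi-contraction theorems require additional restrictions — or the statement should be weakened (for instance to the type I form, which the paper's method does handle). As written, the key step of your plan is missing, and the corollary itself should be treated as unproven rather than as a routine consequence of Theorems \ref{t1} and \ref{t2}.
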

\begin{proof}
	Consider $\rho \in \mathcal{P}$ given by $\rho(x_{1},x_{2},x_{3},x_{4},x_{5})=\max\{x_{1},x_{2},x_{3},x_{4},x_{5}\}.$ Then $T$ is a $\vartheta_\rho$-contraction and the result follows from Theorem \ref{t2} (resp. Theorem \ref{t1}).
\end{proof}
\begin{corollary} (\cite{zam})
	Let $(X,d)$ be a complete metric space and $T\colon X \to CB(X)$ (resp. $K(X)$) a Zamfirescu type $\vartheta$-contraction, that is, there exist
	$\vartheta \in \Omega \mbox{ and } k\in(0,1)$ such that
	\begin{equation*} 
	\vartheta(H(Tx,Ty)) \leq [  \vartheta( \max\{d(x,y),\frac{1}{2}[d(x,Tx)+d(y,Ty)],\frac{1}{2}[d(x,Ty)+d(y,Tx)]\}  ) ] ^{k},
	\end{equation*}
	for all $x,y \in X$ with $H(Tx,Ty)>0.$ Then $T$ has a fixed point.
\end{corollary}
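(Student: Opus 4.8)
The strategy mirrors that of the preceding corollaries: we produce a function $\rho$ in $\mathcal{P}$ that encodes the Zamfirescu functional, verify the three defining properties of $\mathcal{P}$, and then read off the conclusion from Theorem~\ref{t2} (resp.\ Theorem~\ref{t1}). Concretely, the plan is to set $\rho\colon (\mathbb{R^+})^5 \to \mathbb{R^+}$,
\[
\rho(x_1,x_2,x_3,x_4,x_5) = \max\Bigl\{x_1,\ \tfrac{x_2+x_3}{2},\ \tfrac{x_4+x_5}{2}\Bigr\},
\]
so that for every $x,y\in X$
\[
\rho\bigl(d(x,y),d(x,Tx),d(y,Ty),d(x,Ty),d(y,Tx)\bigr)=\max\Bigl\{d(x,y),\ \tfrac{1}{2}[d(x,Tx)+d(y,Ty)],\ \tfrac{1}{2}[d(x,Ty)+d(y,Tx)]\Bigr\}.
\]
With this substitution the Zamfirescu-type inequality in the statement is exactly condition \eqref{a1}, so once $\rho\in\mathcal{P}$ is confirmed, $T$ is a $\vartheta_\rho$-contraction and the existence of a fixed point follows from Theorem~\ref{t2} when $\vartheta$ is right continuous and $T\colon X\to CB(X)$, or from Theorem~\ref{t1} when $T\colon X\to K(X)$.

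The substantive part is checking $\rho\in\mathcal{P}$. Continuity of $\rho$ is clear, since it is the maximum of three affine functions. For $(\rho1)$ one computes $\rho(1,1,1,2,0)=\rho(1,1,1,0,2)=\rho(1,1,1,1,1)=\max\{1,1,1\}=1\in(0,1]$. For $(\rho2)$, each of $x_1$, $\tfrac{x_2+x_3}{2}$, $\tfrac{x_4+x_5}{2}$ is positively homogeneous of degree one, hence so is their maximum; thus $\rho(\alpha x_1,\dots,\alpha x_5)=\alpha\,\rho(x_1,\dots,x_5)$ for all $\alpha\ge0$, which is stronger than sub-homogeneity. For $(\rho3)$, the maximum of non-decreasing functions is non-decreasing, giving the first clause; for the strict clause, if $x_i<y_i$ for $i=1,\dots,4$ then in $\rho(x_1,x_2,x_3,x_4,0)=\max\{x_1,\tfrac{x_2+x_3}{2},\tfrac{x_4}{2}\}$ each of the three entries strictly increases when we pass to $(y_1,y_2,y_3,y_4,0)$, so the maximum does too, and the same reasoning applies to $\rho(x_1,x_2,x_3,0,x_4)$.

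I do not foresee a genuine obstacle here; the one place deserving a moment's attention is the strict-monotonicity clause of $(\rho3)$, where the point is simply that the coordinate forced to $0$ never appears alone inside the maximum, while $x_1$ does, so a strict increase in the first coordinate already forces a strict increase of $\rho$. Once $\rho\in\mathcal{P}$ is in hand, the proof is complete by a single application of Theorem~\ref{t2} (resp.\ Theorem~\ref{t1}).
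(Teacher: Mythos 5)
Your proposal is correct and follows essentially the same route as the paper: the paper's proof simply takes $\rho(x_1,x_2,x_3,x_4,x_5)=\max\{x_1,\frac{x_2+x_3}{2},\frac{x_4+x_5}{2}\}$ and invokes Theorem~\ref{t2} (resp.\ Theorem~\ref{t1}). Your explicit verification of $(\rho1)$--$(\rho3)$, including the strict-monotonicity clause, is a routine but accurate elaboration of what the paper leaves implicit.
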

\begin{proof}
	Consider $\rho \in \mathcal{P}$ given by $\rho(x_{1},x_{2},x_{3},x_{4},x_{5})=\max\{x_{1},\frac{x_{2}+x_{3}}{2},\frac{x_{4}+x_{5}}{2}\}.$ Then $T$ is a $\vartheta_\rho$-contraction and the result follows from Theorem \ref{t2} (resp. Theorem \ref{t1}).
\end{proof}

\section{\textbf{An Application}}
First of all, we recall some basic definitions of fractional calculus (for more details, see \cite{abbas,kilbas}).
For a continuous function $ f:\mathbb{R^+} \to \mathbb{R}, $ the Caputo fractional derivative of order $ \beta $ is defined by
\[ {^C}D^\beta f(t)=\frac{1}{\Gamma(n-\beta)} \int_{0}^{t} (t-s)^{n-\beta-1} f^n(s)ds, \ \ n-1<\beta<n,  \ \ n = [\beta] + 1,\]
and the Riemann–Liouville fractional integral of the function $f$ of order $ \beta $ is given by
\[I^\beta f(t)=\frac{1}{\Gamma(\beta)} \int_{0}^{t}(t-s)^{\beta-1}f(s)ds, \ \ \beta>0, \]	provided the right hand-side is point-wise defined on $ \mathbb{R^+}  $, where $ \Gamma(\cdot) $ is the gamma function, which is defined by $\Gamma(\beta)= \int_{0}^{\infty} t^{\beta-1}e^{-t}dt. $

Let $X:=\mathcal{C}(J,\mathbb{R})$ be the Banach space of all continuous real valued functions defined on $J=[t_0,T]$ endowed with the norm defined by $ \left\|x\right\|=\sup\{\left| x(t)\right|:t\in J \}  . $ By $ \mathcal{L}^1(J,\mathbb{R}) ,$ we denote the Banach space of all measurable functions $ x : J \to \mathbb{R} $ which are Lebesgue integrable endowed
with the norm 
\[  \left\|x\right\|_{\mathcal{L}^1}=\int_{t_0}^{T} \left| x(t)\right|dt. \] 

A multivalued mapping $ F\colon J \to K(\mathbb{R})$ is called measurable if for every $y \in \mathbb{R},$ the function 
\[  t \to d(y,F(t))=\inf\{\left|y-z\right|:z \in F(t)\}   \]
is measurable. 

Let $ G\colon J \times \mathbb{R} \to K(\mathbb{R})$ be a multivalued map and $ u \in X, $ then the set of selections of $ G(\cdot,\cdot), $ denoted by $ S_{G,u}, $ is of lower semi-continuous type if\[ S_{G,u}=\{w\in\mathcal{L}^1(J,\mathbb{R}) \colon w(t)\in G(t,u(t)), \ \mbox{  for almost each } t\in J \} \]
is lower semi-continuous with nonempty closed and decomposable values.

In this section, we present an application of Theorem \ref{t1} in establishing
the existence of solutions for problem \eqref{frac}. To define the solution of problem \eqref{frac}, let us consider its linear variant given by
\begin{equation}\label{frac1}
\left\{
\begin{array}{ll} 
{^C}D^\beta_{t_0}x(t)= f(t), & t\in J, \vspace{0,3cm} \\
x^{(k)}(\alpha)=a_k+\int_{t_0}^{\alpha}g_k(s)ds, & k=0,1,\ldots,n-1, \ \delta \in J,
\end{array}%
\right.  
\end{equation}
where $f \in\mathcal{AC}(J,\mathbb{R}) \ (\mathcal{AC}(J,\mathbb{R})=\{f\colon J \to \mathbb{R}:f \mbox{ is absolutely continuous}  \} ) \mbox{ and } g_k \in X.$
\begin{lemma} (\cite{ahmad}) \label{lem}
	The fractional nonlocal boundary value problem \eqref{frac1} is equivalent to the integral equation
	\[ x(t)=I^\beta f(t)+\sum_{k=0}^{n-1} \frac{(t-\alpha)^k}{k!} \left(a_k+\int_{t_0}^{\alpha}g_k(s)ds-I^{\beta-k}f(\alpha) \right), \quad t \in J. \]
\end{lemma}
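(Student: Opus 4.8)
The plan is to solve the linear problem \eqref{frac1} explicitly by inverting the Caputo derivative, and then to run the argument backwards for the converse implication. The whole argument rests on three classical identities of fractional calculus, available in \cite{abbas,kilbas}: the inversion formula $I^\beta\,{}^C D^\beta_{t_0} x(t) = x(t) - \sum_{j=0}^{n-1}\frac{x^{(j)}(t_0)}{j!}(t-t_0)^j$; the fact that ${}^C D^\beta_{t_0}$ is a left inverse of $I^\beta$, i.e.\ ${}^C D^\beta_{t_0} I^\beta f(t)=f(t)$; and the relation $\frac{d^k}{dt^k} I^\beta f(t) = I^{\beta-k} f(t)$ for $0\le k\le n-1$, which follows from the semigroup law $I^k I^{\beta-k}=I^\beta$ (legitimate since $\beta-k>0$) together with $D^k I^k=\mathrm{id}$ on continuous functions. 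Under the standing hypothesis $f\in\mathcal{AC}(J,\mathbb{R})$ (and $g_k\in X$) all the differentiations under the integral sign and the Fubini steps behind the semigroup law are justified.

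First, assume $x$ solves \eqref{frac1}. Applying $I^\beta$ to both sides of ${}^C D^\beta_{t_0} x(t)=f(t)$ and using the inversion formula gives $x(t) = I^\beta f(t) + p(t)$ with $p$ a polynomial of degree at most $n-1$. Since the monomials $(t-\alpha)^k$, $k=0,\dots,n-1$, span the same space as the $(t-t_0)^k$, I may write $p(t)=\sum_{k=0}^{n-1} b_k (t-\alpha)^k$ for constants $b_k$ to be determined. Differentiating $k$ times and using $\frac{d^k}{dt^k} I^\beta f(t)=I^{\beta-k}f(t)$, I obtain $x^{(k)}(t) = I^{\beta-k}f(t) + \sum_{j=k}^{n-1}\frac{j!}{(j-k)!}\,b_j\,(t-\alpha)^{j-k}$; evaluating at $t=\alpha$ annihilates every term with $j>k$ and leaves $x^{(k)}(\alpha) = I^{\beta-k}f(\alpha) + k!\,b_k$. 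Imposing the nonlocal condition $x^{(k)}(\alpha)=a_k+\int_{t_0}^{\alpha}g_k(s)\,ds$ yields $b_k = \frac{1}{k!}\bigl(a_k+\int_{t_0}^{\alpha}g_k(s)\,ds - I^{\beta-k}f(\alpha)\bigr)$, and substituting back into $x(t)=I^\beta f(t)+\sum_{k=0}^{n-1}b_k(t-\alpha)^k$ reproduces exactly the claimed integral formula.

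For the converse, start from the displayed integral formula and apply ${}^C D^\beta_{t_0}$ to both sides. Because $\beta\in(n-1,n)$, the Caputo derivative of order $\beta$ is built from the $n$-th ordinary derivative and therefore annihilates every polynomial of degree $\le n-1$, so the entire finite sum drops out; combined with ${}^C D^\beta_{t_0} I^\beta f(t)=f(t)$ this shows the fractional equation holds. To recover the boundary data, differentiate the integral formula $k$ times and evaluate at $t=\alpha$ exactly as in the previous paragraph: the polynomial part contributes $k!\,b_k = a_k+\int_{t_0}^{\alpha}g_k(s)\,ds - I^{\beta-k}f(\alpha)$ and the $I^\beta f$ part contributes $I^{\beta-k}f(\alpha)$, whose sum is precisely $a_k+\int_{t_0}^{\alpha}g_k(s)\,ds$. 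This establishes the equivalence.

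I expect the only genuinely delicate point to be the careful justification of $\frac{d^k}{dt^k}I^\beta f = I^{\beta-k}f$ in the range $\beta-k\in(0,1)$ and of the inversion formula $I^\beta\,{}^C D^\beta_{t_0}x = x-(\text{Taylor polynomial at }t_0)$; both are standard under $f\in\mathcal{AC}(J,\mathbb{R})$, and once they are invoked the remainder is a routine bookkeeping computation with the coefficients $b_k$.
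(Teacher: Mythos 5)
Your proof is correct; note that the paper itself offers no proof of this lemma, quoting it directly from \cite{ahmad}, and your derivation is essentially the standard argument used there: invert the Caputo derivative via $I^\beta\,{}^C D^\beta_{t_0}x = x - (\text{polynomial of degree}\le n-1)$, and determine the polynomial from the conditions at $t=\alpha$, your choice of the basis $(t-\alpha)^k$ being a clean way to decouple the coefficients so that each $b_k$ is read off from a single condition rather than from a linear system. The only points needing care, which you correctly flag, are the identities $\frac{d^k}{dt^k}I^\beta f = I^{\beta-k}f$ and ${}^C D^\beta_{t_0} I^\beta f = f$ under the standing regularity $f\in\mathcal{AC}(J,\mathbb{R})$, both standard in \cite{kilbas}.
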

Our hypotheses are on the following data :
\begin{enumerate}[(A)]
	\item Let $ F\colon J \times \mathbb{R} \to K(\mathbb{R})$ be such that $ F(\cdot,x)\colon J  \to K(\mathbb{R})$ is measurable for each $x \in \mathbb{R};$\vspace{0,2cm}
	\item  for almost all $t\in J \mbox{ and }x,\tilde{x}\in \mathbb{R} \mbox{ with } m \in \mathcal{C}(J,(0,\infty))$
	\[ H(F(t,x),F(t,\tilde{x}))\le m(t) \left| x-\tilde{x}\right| \] 
	and $d(0,F(t,0))\le m(t);$\vspace{0,2cm}
	\item there exist functions $p_k\in \mathcal{C}(J,(0,\infty))$ such that \[\left| g_k(t,x)-g_k(t,\tilde{x})\right| \le p_k(t) \left| x-\tilde{x}\right|,  \]
	for $t\in J, \ k=0,1,\ldots,n-1 \mbox{ and }x,\tilde{x}\in \mathbb{R}; $\vspace{0,2cm}
	\item there exists $ \tau \in (0,\infty)$ such that  
	\[  \gamma_1 \left\| m\right\| +\gamma_2\le e^{-\tau},  \]
	where 
	\[ \gamma_1=\left\lbrace\frac{2}{\Gamma(\beta+1)}+\sum_{k=1}^{n-1} \frac{1}{k! \, \Gamma(\beta-k+1)} \right\rbrace (T-t_{0})^\beta \]
	and
	\[ \gamma_2= \sum_{k=0}^{n-1}\frac{(T-t_{0})^k\left\|p_k \right\| }{k!}.\]
\end{enumerate}
We are now ready to present main result of this section.
\begin{theorem}\label{thfrac}
Assume that the conditions $ (A)-(D) $ hold. Then the fractional differential
inclusion problem \eqref{frac} has at least one solution on $ X. $
\end{theorem}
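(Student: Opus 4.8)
The plan is to recast \eqref{frac} as a multivalued fixed point problem on the Banach space $X=\mathcal C(J,\mathbb R)$ and then invoke Theorem \ref{t1} (Theorem \ref{t2} works equally well, since the $\vartheta$ chosen below is continuous, hence right continuous). Following Lemma \ref{lem}, applied with an integrable selection $f$ of $t\mapsto F(t,x(t))$ and with the continuous functions $g_{k}(\cdot,x(\cdot))$, I define $N\colon X\to P(X)$ by declaring that $h\in N(x)$ if and only if
\[
h(t)=I^{\beta}f(t)+\sum_{k=0}^{n-1}\frac{(t-\alpha)^{k}}{k!}\Bigl(a_{k}+\int_{t_{0}}^{\alpha}g_{k}(s,x(s))\,ds-I^{\beta-k}f(\alpha)\Bigr),\qquad t\in J,
\]
for some $f\in S_{F,x}:=\bigl\{w\in\mathcal L^{1}(J,\mathbb R):w(t)\in F(t,x(t))\text{ for a.e.\ }t\in J\bigr\}$. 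By Lemma \ref{lem}, $x\in X$ is a solution of \eqref{frac} precisely when $x\in N(x)$, so it suffices to produce a fixed point of $N$.

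First I would verify that $N$ is well defined with values in $K(X)$. Fix $x\in X$. By (A), $t\mapsto F(t,x(t))$ is measurable with nonempty compact values, hence has a measurable selection; by (B),
\[
d(0,F(t,x(t)))\le d(0,F(t,0))+H(F(t,0),F(t,x(t)))\le m(t)(1+\|x\|)\le\|m\|(1+\|x\|),
\]
and since $m$ is continuous on the compact interval $J$ the selection can be chosen with $|f(t)|\le\|m\|(1+\|x\|)$ a.e.; thus $S_{F,x}\neq\emptyset$ and $N(x)\neq\emptyset$. All these selections are dominated by one and the same constant, so the affine map $f\mapsto h$ above carries $S_{F,x}$ into a uniformly bounded and equicontinuous subset of $C(J)$, which is relatively compact by the Arzelà--Ascoli theorem; together with a standard limiting argument for the closedness of $N(x)$ this yields $N(x)\in K(X)$.

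Next comes the contraction estimate. Fix $x,\tilde x\in X$ and $h_{1}\in N(x)$ built from $f_{1}\in S_{F,x}$. By (B) the multifunction $t\mapsto F(t,\tilde x(t))\cap\{w\in\mathbb R:|f_{1}(t)-w|\le m(t)|x(t)-\tilde x(t)|\}$ has nonempty values and measurable graph, hence admits a measurable selection $f_{2}$; then $f_{2}\in S_{F,\tilde x}$ and $|f_{1}(t)-f_{2}(t)|\le m(t)|x(t)-\tilde x(t)|$ a.e. Form the corresponding $h_{2}\in N(\tilde x)$. A termwise estimate using $|I^{\beta}(f_{1}-f_{2})(t)|\le\frac{(T-t_{0})^{\beta}}{\Gamma(\beta+1)}\|m\|\,\|x-\tilde x\|$, the analogous bounds for the $I^{\beta-k}(f_{1}-f_{2})(\alpha)$ terms, and condition (C) for the nonlocal terms, collected through the definitions of $\gamma_{1},\gamma_{2}$ in (D), gives $\|h_{1}-h_{2}\|\le(\gamma_{1}\|m\|+\gamma_{2})\,\|x-\tilde x\|$, hence by (D) $\|h_{1}-h_{2}\|\le e^{-\tau}d(x,\tilde x)$. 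Interchanging the roles of $x$ and $\tilde x$ gives $H(N(x),N(\tilde x))\le e^{-\tau}d(x,\tilde x)$ for all $x,\tilde x\in X$. Now take $\vartheta(t)=e^{\sqrt t}\in\Omega$ (non-decreasing and satisfying $(\vartheta3)$ with $r=\tfrac12$, $\lambda=1$), $\rho(x_{1},x_{2},x_{3},x_{4},x_{5})=x_{1}\in\mathcal P$, and $k=e^{-\tau/2}\in(0,1)$. If $H(N(x),N(\tilde x))>0$ then $d(x,\tilde x)>0$ and $\vartheta(H(N(x),N(\tilde x)))=e^{\sqrt{H(N(x),N(\tilde x))}}\le e^{e^{-\tau/2}\sqrt{d(x,\tilde x)}}=[\vartheta(d(x,\tilde x))]^{k}$, which is exactly \eqref{a1}, because for this $\rho$ one has $\rho(d(x,\tilde x),d(x,Nx),d(\tilde x,N\tilde x),d(x,N\tilde x),d(\tilde x,Nx))=d(x,\tilde x)$. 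Thus $N$ is a $\vartheta_{\rho}$-contraction, and Theorem \ref{t1} (resp.\ Theorem \ref{t2}) furnishes $u\in X$ with $u\in N(u)$, i.e.\ a solution of \eqref{frac}.

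The routine part is the chain of fractional-integral inequalities that produces the constant $\gamma_{1}\|m\|+\gamma_{2}$. The two genuinely technical points --- and where I expect most of the work --- are: (i) the Covitz--Nadler-style measurable-selection step producing $f_{2}$ with $|f_{1}-f_{2}|\le m|x-\tilde x|$ a.e., which requires checking that the intersection multifunction has measurable graph and nonempty values (the latter directly from the Lipschitz estimate in (B)); and (ii) verifying that $N$ has closed (indeed compact) values, where the uniform $\mathcal L^{1}$-bound on the selections coming from the continuity of $m$ in (B), combined with Arzelà--Ascoli and a limiting argument, is the essential ingredient.
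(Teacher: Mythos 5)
Your proposal is correct and follows essentially the same route as the paper: the same fixed point operator built from Lemma \ref{lem}, the same Covitz--Nadler-type measurable selection argument under (B) and (C) yielding $H(N(x),N(\tilde x))\le e^{-\tau}\|x-\tilde x\|$, and the same choice $\vartheta(t)=e^{\sqrt t}$, $\rho(x_1,\ldots,x_5)=x_1$, $k=e^{-\tau/2}$ before invoking Theorem \ref{t1}. Your Arzel\`a--Ascoli verification that $N$ has compact values is in fact more detailed than the paper, which asserts this and omits the proof.
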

\begin{proof}
Using Lemma \ref{lem}, define an operator $ \Lambda_F \colon X \to P(X) $ by
\begin{align*}
\Lambda_F(x)=&\left\lbrace v \in \mathcal{C}(J,\mathbb{R}) \colon v(t)= \int_{t_0}^{t}\frac{(t-s)^{\beta-1}}{\Gamma(\beta)}f(s)ds \right. \\
&+\sum_{k=0}^{n-1} \frac{(t-\alpha)^k}{k!} \left. \left(a_k+\int_{t_0}^{\alpha}g_k(s,x(s))ds-\int_{t_0}^{\alpha}\frac{(\alpha-s)^{\beta-k-1}}{\Gamma(\beta-k)}f(s)ds \right)\right\rbrace 
\end{align*}
for $ f\in S_{F,x}. $ Note that the set $S_{F,x}$ is nonempty for each $ x \in X $ by assumption $(A),$ so $F$ has a measurable selection (see Theorem 3.6 in \cite{hu}). Also, $\Lambda_F(x)$ is compact for each $ x \in X. $ This is obvious since $ S_{F,x}$ is compact
($ F $ has compact values), and therefore we omit its proof. We now prove that $\Lambda_F \mbox{ is a } \vartheta_\rho$-contraction. Let $x,\tilde{x} \in \mathcal{C}(J,\mathbb{R}) \mbox{ and } v_1 \in \Lambda_F(x). $ Then there exists $ w_1(t) \in F(t,x(t)) $ such that for all $ t\in J,$ we obtain 
\begin{align*}
v_{1}(t)=& \int_{t_0}^{t}\frac{(t-s)^{\beta-1}}{\Gamma(\beta)}w_{1}(s)ds  \\
&+\sum_{k=0}^{n-1} \frac{(t-\alpha)^k}{k!}  \left(a_k+\int_{t_0}^{\alpha}g_k(s,x(s))ds-\int_{t_0}^{\alpha}\frac{(\alpha-s)^{\beta-k-1}}{\Gamma(\beta-k)}w_{1}(s)ds \right).
\end{align*}
By the assumption $(B),$ we have
\[  H(F(t,x),F(t,\tilde{x}))\le m(t) \left| x(t)-\tilde{x}(t)\right|. \]
So, there exists $ h^{\star} \in F(t,\tilde{x}(t))$ such that
\[ \left|w_{1}(t)-h^{\star}\right| \le m(t) \left| x(t)-\tilde{x}(t)\right|, \quad t\in J. \]
Define the operator $ H\colon J \to P(\mathbb{R}) $ by
\[ H(t)=\{h^{\star}\in \mathbb{R}\colon\left|w_{1}(t)-h^{\star}\right| \le m(t) \left| x(t)-\tilde{x}(t)\right|\}. \]
Since $ H(t) \cap F(t,\tilde{x}(t)) $ is measurable (see Proposition 3.4 in \cite{hu}), there exists a function $w_{2}(t)$ which is a measurable selection for $ H. $ Hence, $ w_{2}(t)\in F(t,\tilde{x}(t)) $ and for all $ t\in J, $ 
\[ \left|w_{1}(t)-w_{2}(t) \right| \le m(t) \left| x(t)-\tilde{x}(t)\right|. \]
Now, we define
\begin{align*}
v_{2}(t)=& \int_{t_0}^{t}\frac{(t-s)^{\beta-1}}{\Gamma(\beta)}w_{2}(s)ds  \\
&+\sum_{k=0}^{n-1} \frac{(t-\alpha)^k}{k!}  \left(a_k+\int_{t_0}^{\alpha}g_k(s,\tilde{x}(s))ds-\int_{t_0}^{\alpha}\frac{(\alpha-s)^{\beta-k-1}}{\Gamma(\beta-k)}w_{2}(s)ds \right).
\end{align*}
It follows that, for all $ t\in J $ 
\begin{align*}
\left| v_{1}(t)-v_{2}(t)\right|\le &\int_{t_0}^{t}\frac{(t-s)^{\beta-1}}{\Gamma(\beta)}\left| w_{1}(s)-w_{2}(s)\right|ds  \\
&+\sum_{k=0}^{n-1} \frac{(t-\alpha)^k}{k!} \int_{t_0}^{\alpha}\frac{(\alpha-s)^{\beta-k-1}}{\Gamma(\beta-k)}\left| w_{1}(s)-w_{2}(s)\right|ds\\ 
&+\sum_{k=0}^{n-1} \frac{(t-\alpha)^k}{k!}\int_{t_0}^{\alpha}\left| g_k(s,x(s))-g_k(s,\tilde{x}(s))\right|ds\\
\le& \left\lbrace \left\lbrace\frac{1}{\Gamma(\beta+1)}+\sum_{k=0}^{n-1} \frac{1}{k! \, \Gamma(\beta-k+1)} \right\rbrace (T-t_{0})^\beta \left\|m \right\|  \right. \\
& +\sum_{k=0}^{n-1}\left. \frac{(T-t_{0})^k\left\|p_k \right\| }{k!}  \right\rbrace \left\|x-\tilde{x} \right\| ,
\end{align*}
and so
\begin{align*}
\left| v_{1}(t)-v_{2}(t)\right|
\le& \left\lbrace \left\lbrace\frac{2}{\Gamma(\beta+1)}+\sum_{k=1}^{n-1} \frac{1}{k! \, \Gamma(\beta-k+1)} \right\rbrace (T-t_{0})^\beta \left\|m \right\|  \right. \\
& +\sum_{k=0}^{n-1}\left. \frac{(T-t_{0})^k\left\|p_k \right\| }{k!}  \right\rbrace \left\|x-\tilde{x} \right\| .
\end{align*}
Thus, we obtain
\begin{align*}
\left\|  v_{1}-v_{2}\right\| \le (\gamma_{1}\left\|m \right\|+\gamma_{2})   \left\|x-\tilde{x} \right\| \le e^{-\tau} \left\|x-\tilde{x} \right\|.
\end{align*}
Now, by just interchanging the role of $ x \mbox{ and } \tilde{x} $, we reach to
\begin{align} \label{fr}
H(\Lambda_F(x),\Lambda_F(\tilde{x})) \le   e^{-\tau} \left\|x-\tilde{x} \right\| .
\end{align}
Consider $\rho \in \mathcal{P}$ and $\vartheta \in \Omega $ given by $\rho(x_{1},x_{2},x_{3},x_{4},x_{5})=x_{1}$ and $\vartheta(t)=e^{\sqrt t},$ respectively. Then, by (\ref{fr}), we infer 
\begin{align*}
e^{\sqrt {H(\Lambda_F(x),\Lambda_F(\tilde{x}))}} \leq e^{\sqrt {e^{-\tau} \left\|x-\tilde{x} \right\| }} \leq \left[ e^{\sqrt { \left\|x-\tilde{x} \right\| }}\right]^{k}, 
\end{align*}
which implies that
\begin{align*}
\vartheta (H(\Lambda_F(x),\Lambda_F(\tilde{x}))) 
\leq& \left[\vartheta(\rho(\left\|x-\tilde{x} \right\|,\left\|x-\Lambda_F(x) \right\|,\left\| \tilde{x}-\Lambda_F(\tilde{x})\right\| ,\right.\\ &\left. \left\| x-\Lambda_F(\tilde{x})\right\| ,\left\| \tilde{x}-\Lambda_F(x)\right\|))   \right]^{k}, 
\end{align*}
for all $x,\tilde{x} \in X,$ where $k=\sqrt {e^{-\tau}}.$ Since $\tau>0,$ then $k \in (0,1).$  This means that $\Lambda_F$ is a $ \vartheta_\rho$-contraction. Consequently, by Theorem \ref{t1},  $\Lambda_F$ has a fixed point $ x\in X $ which is a solution of the problem \eqref{frac}. 
\end{proof}
\begin{example}
Consider the fractional differential inclusion problem given by
\begin{equation}\label{exfr}
\left\{
\begin{array}{ll} 
{^C}D^{6.7}_{0}x(t)\in F(t,x(t)), & t\in [0,1], \vspace{0,3cm} \\
x^{(k)}(0.5)=1+\int\limits_{0}^{0.5}\frac{s^k}{3(k+1)}e^{-x(s)}ds, & k=0,1,\ldots,6, 
\end{array}%
\right.  
\end{equation}
where $ t_0=0, \ T=1, \ \beta=6.7, \ \alpha=0.5, \ a_k=1, \ g_k(t,x(t))=\frac{t^k}{3(k+1)}e^{-x(t)}$ and \vspace{0,2cm} \linebreak $F\colon [0,1]\times\mathbb{R} \to P(\mathbb{R}) $ is a multivalued mapping given by
$  F(t,x)=\left[ 0, \ \frac{t \left| x(t)\right| }{8\, (1+\left| x(t)\right|)}\right]. $ Note that
\[ t\to F(t,x)= \left[ 0, \ \frac{t \left| x(t)\right| }{8\, (1+\left| x(t)\right|)}\right]\]
is measurable for each $x \in \mathbb{R},$ since both the lower and upper functions are measurable on $[0,1]\times\mathbb{R}.$ Also
\[\left| g_k(t,x)-g_k(t,\tilde{x})\right| \le \frac{t^k}{3\,(k+1)} \left| e^{-x(t)}-e^{-\tilde{x}(t)}\right|.  \]
Here $ \, p_k(t)= \frac{t^k}{3\,(k+1)}$ and so $ \left\|p_k\right\|= \frac{1}{3\,(k+1)}, \mbox{ for } k=0,1,\ldots,6. $ On the other hand, we infer that
\[ \sup\{\left|y \right|:y \in F(t,x) \} \le \frac{t \left| x(t)\right| }{8\, (1+\left| x(t)\right|)}\le\frac{1}{8}, \mbox{ for each } (t,x)\in [0,1]\times\mathbb{R}, \]
and
\begin{align*}
H(F(t,x),F(t,\tilde{x}))) &=\left( \left[ 0, \ \frac{t \left| x(t)\right| }{8\, (1+\left| x(t)\right|)}\right], \left[ 0, \ \frac{t \left| \tilde{x}(t)\right| }{8\, (1+\left| \tilde{x}(t)\right|)}\right]\right)  \\ 
&\le \frac{t}{8}\left|x- \tilde{x}\right| .
\end{align*}
Here $ m(t)=\frac{t}{8} \ \mbox{with} \ \left\|m \right\|\approx 0.125. $ Besides, we find that
\[ \gamma_1=\frac{2}{\Gamma(7.7)}+\frac{1}{\Gamma(6.7)}+\frac{1}{\Gamma(5.7)}+\frac{1}{\Gamma(4.7)}+\frac{1}{\Gamma(3.7)}+\frac{1}{\Gamma(2.7)}+\frac{1}{\Gamma(1.7)}\approx2.07, \]
\[ \gamma_2=\frac{1}{3}+\frac{1}{6}+\frac{1}{9}\cdot\frac{1}{2}+\frac{1}{12}\cdot\frac{1}{6}+\frac{1}{15}\cdot\frac{1}{24}+\frac{1}{18}\cdot\frac{1}{120}+\frac{1}{21}\cdot\frac{1}{720}=0.5727, \]
and so
\[ \gamma_1 \left\| m\right\| +\gamma_2 \, \approx \, (2.07)\cdot(0.125)+0.5727=0.83145 \le e^{-\tau} \]
where $ \tau \in \left(0,\frac{1}{6} \right] . $ The compactness of $ F $ together with the above calculations lead to the existence of solution
of the problem \eqref{exfr} by Theorem \ref{thfrac}.
\end{example}
\section{\bf{Conclusion}}
In this paper, a new type of contractions has been proposed for multivalued mappings  by weakening the conditions on $\vartheta$ and by using auxilary functions. New fixed point theorems have been derived for multivalued mappings on complete metric spaces by means of this new class of contractions, which generalize the results in \cite{ber,chat,cir,ciric,hardy,jleli,kannan,nadler,reich,vetro,zam} and many others in the literature. To support of effectiveness and usability of new theory have been furnished several examples. Finally, sufficient conditions have been investigated to ensure the existence of solutions for the nonlocal integral boundary value problem of Caputo type fractional differential inclusions by using the results obtained herein.

\bigskip


\end{document}